\theoremstyle{definition}
\newtheorem{thm}{Theorem}
\newtheorem{dfn}[thm]{Definition}
\newtheorem{lem}[thm]{Lemma}
\newtheorem{pps}[thm]{Proposition}
\newtheorem{es}[thm]{Example}
\newtheorem{thmi}{Theorem}
\newtheorem{cori}[thmi]{Corollary}
\newtheorem{lemi}[thmi]{Lemma}
\newtheorem{ppsi}[thmi]{Proposition}
\newcommand{\C}{\mathbb C}
\newcommand{\Sp}{\mathfrak{S}}
\newcommand{\inv}[1]{{#1^{-1}}}
\newcommand{\cl}{\mbox{cl}}
\newcommand{\Ccg}[1]{{\overline{#1}}}
\newcommand{\lcm}{\mbox{lcm}}
\newcommand{\gen}[1]{\langle #1 \rangle}
\newcommand{\Zel}[1]{\mathfrak B_{p^{#1}}}
\newcommand{\Hom}{\mbox{Hom}}
\newcommand{\Hn}{\mbox{H}}
\newcommand{\res}{\mbox{res~}}
\renewcommand{\inf}{\mbox{inf~}}
\newcommand{\M}{\mbox{M}}
\newcommand{\ZU}{Z_{\mbox{\scriptsize u}}}
\newcommand{\BU}{B_{\mbox{\scriptsize u}}}
\newcommand{\GU}{\Gamma_{\mbox{\scriptsize u}}}
\newcommand{\idempotent}[1]{\varepsilon_#1}
\newcommand{\omegatimes}[2]{\check{#1}\propto{#2}}
\title{The unitary cover of a finite group and\\ the exponent of the Schur multiplier.}
\author{
Nicola Sambonet\footnote{Department of Mathematics, Technion, Haifa, 32000 Israel, sambonet@tx.technion.ac.il.}\vspace{4mm}\\
In memory of David Chillag.}
\date{\vspace{-5ex}}
\begin{document}

\maketitle

\begin{abstract}
For a finite group we introduce a particular central extension, the unitary cover, having minimal exponent among those satisfying the projective lifting property. We obtain new bounds for the exponent of the Schur multiplier relating to subnormal series, and we discover new families for which the bound is the exponent of the group.
Finally, we show that unitary covers are controlled by the Zel'manov solution of the restricted Burnside problem for 2-generator groups.

\end{abstract}

\section{Introduction}
The \emph{Schur multiplier} of a finite group $G$ is the second cohomology group with complex coefficients, denoted by $\M(G)=\Hn^2(G,\C^\times)$.
It was introduced in the beginning of the twentieth century by I.~Schur, aimed at the study of projective representations.
To determine $\M(G)$ explicitly is often a difficult task.
Therefore, it is of interest to provide bounds for numerical qualities of $\M(G)$ as the order, the rank, and - our subject - the exponent.

In 1904 Schur already showed that $[\exp\M(G)]^2$ divides the order of the group, and this bound is tight as $\M(C_n\times C_n)=C_n$.
Note that $C_n\times C_n$ is an example of group satisfying
\begin{equation}\label{eq exp MG divides exp G}
 \exp\M(G)\ \ |\ \ \exp G\ ,
\end{equation}
property which has been proven for many classes of groups.

\subsection{Groups such that exp M(G) divides exp G}\label{section exp G divides exp MG}
Firstly, \eqref{eq exp MG divides exp G} holds for every abelian group $G$.
Indeed, consider the cyclic decomposition ordered by recursive division:
\begin{equation}\label{eq cyclic decomposition}
 G=\overset{n}{\underset{i=1}{\oplus}} C_{d_i}\ \ ,\ \ d_i\ |\ d_{i+1}\ .
\end{equation}
 By Schur it is known (cf.~\cite[p.~317]{karpilovsky}) that 
\begin{equation}\label{eq the multiplier of an abelian group}
 \M(G)=\overset{n}{\underset{i=1}{\oplus}}{C_{d_i}}^{n-i}\ .
\end{equation}
Consequently, $\exp\M(G)=d_{n-1}$ which in turn divides $\exp G=d_n$.
A second important example of groups enjoying \eqref{eq exp MG divides exp G} are the finite simple groups, whose multipliers are known and listed in the Atlas \cite{FSGC}.

A standard argument (cf.~\cite[Th.10.3]{brown}) proposes to focus on $p$-groups.
Indeed, the $p$-component of $\M(G)$ is embedded in the multiplier of a $p$-Sylow via the restriction map.
Therefore, if $\Pi(G)$ denotes the set of prime divisors of $G$, and $S_p$ denotes a $p$-Sylow of $G$ for $p\in\Pi(G)$, then
\begin{equation}\label{eq p-Sylows}
\exp\M(G)\ |\ \prod_{p\in\Pi(G)}\exp\M(S_p)\ .
\end{equation}
Clearly, since $$\exp G=\prod_{p\in\Pi(G)}\exp S_p\ ,$$ if \eqref{eq exp MG divides exp G} holds for every $p$-Sylow of $G$, then it does also for $G$.

Then, a fundamental feature of $p$-groups is the nilpotency class.
Recently, P.~Moravec completed a result of M.~R.~Jones \cite[Rem.~2.8]{jones74} proving \eqref{eq exp MG divides exp G} for groups of class at most 3, and extended this result to groups of class 4 in the odd-order case \cite[Th.~12, Th.~13]{moravecSg}.
Moravec discovered many other families enjoying \eqref{eq exp MG divides exp G}: metabelian groups of prime exponent \cite[Pr.~2.12]{moravec}, 3-Engel groups, 4-Engel groups in case the order is coprime with 2 and 5 \cite[Cor.~5.5, Cor.~4.2]{moravecEngel}, $p$-groups of class lower than $p-1$ \cite[Pr.~11]{moravecSg}, and $p$-groups of maximal class \cite[Th.~1.4]{moravecCoclass}.
Without pretending to complete a list, we mention that \eqref{eq exp MG divides exp G} holds for extraspecial and abelian-by-cyclic groups, as follows from an argument of R.J.~Higgs \cite[Pr.~2.3, Pr.~2.4]{higgs}.

We have not cited 2-groups of class 4.
Actually, the general validity of \eqref{eq exp MG divides exp G} was disproved by such a group long time before all the reported examples.
A.~J.~Bayes, J.~Kautsky and J.~W.~Wamsley introduced a group of order $2^{68}$ and exponent 4 whose multiplier has exponent 8 \cite{bayes}.
Lately, Moravec described another counterexample of order $2^{11}$ and class 6 \cite[Ex.~2.9]{moravec}.
Nevertheless, these essentially are the only counterexamples we know: both were obtained by computer technique and satisfy $\exp\M(G)=8$ but $\exp G=4$.

The scenario in case $p>2$ is indeed not clear yet.
For instance, groups of exponent 3 satisfy \eqref{eq exp MG divides exp G} as they have nilpotency class at most 3 (cf.~\cite[p.~6]{moravecEngel}). This family, as well as the metabelian groups of prime exponent which we already encountered, strengthens the idea that \eqref{eq exp MG divides exp G} should at least holds for groups of prime exponent.

A.~Lubotzky and A.~Mann proved \eqref{eq exp MG divides exp G} for powerful $p$-groups \cite[Th.~2.4]{lubotzkymann}.
By definition, a $p$-group $G$ for $p>2$ is \emph{powerful} if the derived subgroup $G'$ is contained in the agemo subgroup $$\mho(G)=\gen{g^p\ |\ g\in G}$$ generated by the $p$-powers.
Abelian $p$-groups are powerful, and if a $p$-group is powerful, then its quotients of exponent $p$ are necessarily abelian.
For this reason, powerful $p$-group and groups of exponent $p$ can be considered as two extremes dealing with $p$-groups for $p>2$.

\subsection{Bounds for exp M(G)}
Beside the result on powerful $p$-groups, Lubotzky and Mann provided a bound for $\exp\M(G)$ involving the exponent and the rank of $G$ \cite[Pr.~2.6,Pr.~4.2.6]{lubotzkymann}, this has been recently refined by J.~Gonz\'alez S\'anchez and A.~P.~Nicolas \cite[Th.~2]{sanchez}.

Meanwhile, Moravec proved the existence of a bound only in terms of the exponent of the group \cite[Pr.~2.4]{moravec}.
This bound relies on the Zel'manov solution of the restricted Burnside problem, with the idea that the problem of finding bounds for the exponent of the multiplier can be reduced in some extent to 2-generator groups, we will give an alternative proof of this fact.

Since the use of the Zel'manov solution gives a bound which is apparently far from being efficient, Moravec also stated a more practical bound:
\begin{equation}\label{eq moravec bound}
 \exp\M(G)\ \ |\ \ (\exp G)^{2(d-1)}
\end{equation}
where $d$ is the derived length assumed to be greater than 1 \cite[Th.~2.13]{moravec}.
The bound analogue to \eqref{eq moravec bound} with the nilpotency class $c$ in place of $d$ was previously discovered by Jones \cite[Cor.~2.7]{jones74}, then modified as
\begin{equation}\label{eq ellis bound}
 \exp\M(G)\ \ |\ \ (\exp G)^{\lceil c/2\rceil}
\end{equation}
by G.~Ellis \cite[Th.~B1]{ellis}.
As Moravec illustrated, \eqref{eq moravec bound} improves \eqref{eq ellis bound} for $c\geq 11$ via the formula
\begin{equation}\label{eq derived length and nilpotency class}
 d\leq\lfloor\log_2 c\rfloor+1
\end{equation}
relating nilpotency class and derived length (cf.~\cite[5.1.11]{robinson}).

\section{Results}
To give evidence for the content of this paper, we present some advancement concerning the problems exposed in the introduction.
We improve \eqref{eq moravec bound} and consequently \eqref{eq ellis bound}, also including the case of abelian groups for which \eqref{eq exp MG divides exp G} holds.
\begin{thmi}\label{thm derived length}
Let $G$ be a $p$-group of derived length $d$. Then
 $$\exp\M(G)\ \ |\ \ \left\{\begin{array}{rl}
 2^{d-1}\cdot(\exp G)^d & p=2\\
 (\exp G)^d & p>2
 \end{array}\right.$$
\end{thmi}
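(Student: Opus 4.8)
The plan is to pass to the unitary cover, where $\exp\ZU(G)=\exp\M(G)$, and then to strip the derived series of $G$ one layer at a time, controlling how much the exponent of the multiplier grows at each step. Two observations set this up. For a central extension $1\to Z\to E\to G\to 1$ the projective lifting property is equivalent to surjectivity of the transgression $\Hom(Z,\C^\times)\to\M(G)$; since a representation group of $G$ has central kernel isomorphic to $\M(G)$ and enjoys the lifting property, the minimality built into $\GU(G)$ forces $\exp\ZU(G)=\exp\M(G)$, while $\GU(G)$ is again a finite $p$-group. So it suffices to bound $\exp\ZU(G)$, and I would do this by induction on the derived length $d$. For $d=1$ the group is abelian and \eqref{eq the multiplier of an abelian group} gives $\exp\M(G)=d_{n-1}$, which divides $d_n=\exp G$: this is the statement for $d=1$, with no factor of $2$.

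For the inductive step put $A=G^{(d-1)}$, an abelian normal subgroup, and $Q=G/A$, of derived length $d-1$. The heart of the matter is a \emph{peeling lemma}:
\[ \exp\M(G)\ \ |\ \ \epsilon\cdot\exp(A)\cdot\exp\M(Q),\qquad \epsilon=1\ \text{if}\ p>2,\quad \epsilon=2\ \text{if}\ p=2. \]
Granting it and feeding in the inductive bound $\exp\M(Q)\mid\epsilon^{d-2}(\exp Q)^{d-1}$, one gets $\exp\M(G)\mid\epsilon^{d-1}\exp(A)(\exp Q)^{d-1}$, and as $\exp A$ and $\exp Q$ both divide $\exp G$ this is $\epsilon^{d-1}(\exp G)^{d}$, exactly the claimed bound, with the factor $2$ entering once per inductive step and never at the abelian base. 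To prove the peeling lemma I would argue through the cover: take a unitary cover $\GU(Q)$ of $Q$, pull it back along $G\to Q$ to a central extension $E$ of $G$ (which realises precisely the projective classes of $G$ inflated from $Q$), and enlarge $E$ by a central subgroup of least possible exponent carrying the remaining projective classes. The resulting cover of $G$ has central kernel of exponent dividing $\exp\M(Q)$ times $\exp\ker(\M(G)\to\M(Q))$, so by minimality $\exp\M(G)=\exp\ZU(G)$ divides $\exp\M(Q)\cdot\exp\ker(\M(G)\to\M(Q))$; thus the peeling lemma reduces to the divisibility $\exp\ker(\M(G)\to\M(Q))\mid\epsilon\cdot\exp(A)$.

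That divisibility is the main obstacle. The Lyndon--Hochschild--Serre spectral sequence for $A\triangleleft G$ exhibits $\ker(\M(G)\to\M(Q))$ as an extension of a subquotient of the $Q$-coinvariants $\M(A)_Q$ by a subquotient of $\Hn_1(Q,A)$, with $Q$ acting on $A$ by conjugation; since multiplication by $\exp A$ annihilates both $\M(A)$ and $A$, each of the two layers has exponent dividing $\exp A$, but naively this only produces $(\exp A)^2$ and merely recovers Moravec's bound \eqref{eq moravec bound}. The improvement requires collapsing the two layers into a single power of $\exp A$, at the cost of at most the factor $\epsilon$. I expect this to come from a commutator computation inside a representation group $R$ of $G$: because $A=G^{(d-1)}$ is generated by commutators $[x,y]$ with $x,y\in G^{(d-2)}$, the collection formula rewrites $[x,y]^{\exp A}$ in terms of lower-weight data together with a correction whose only $2$-primary part is the binomial coefficient $\binom{\exp A}{2}$, which is why the anomaly at $p=2$ is a single factor of $2$. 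Carrying out this rewriting so that it yields a clean bound on $\ker(\M(G)\to\M(Q))$, controlling the higher-weight correction terms, and checking that the $2$-primary loss accumulated over the whole induction is exactly $2^{d-1}$ and no worse, is the technical crux.
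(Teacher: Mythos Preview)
Your inductive strategy and the shape of the ``peeling lemma'' match the paper, but there are two problems. First, the identity $\exp\ZU(G)=\exp\M(G)$ is false: one has $\M(G)\simeq\ZU(G)/\BU(G)$, and $\BU(G)$ is generally nontrivial (its exponent divides $\exp G$, and for instance $\M(C_p)=1$ while $\ZU(C_p)=\BU(C_p)$ has exponent $p$ for odd $p$). Minimality of $\GU(G)$ concerns the exponent of the \emph{whole} extension, not of its kernel; the correct relations are $\exp\GU(G)=\lcm\{\exp\ZU(G),\exp G\}$ and $\exp\M(G)\mid\exp\GU(G)$. This does not destroy the plan, but it means your reduction of the peeling lemma to a bound on $\exp\ker(\M(G)\to\M(Q))$ alone is not justified as written.

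The real gap is that you do not prove the peeling lemma: the spectral-sequence estimate only gives $(\exp A)^2$, and the commutator-collection improvement is left as a hope. The paper avoids this entirely. It proves directly (Lemma~\ref{lem abelian 2-groups and powerful p-groups}) that for an abelian $p$-group $A$ one has $\exp\GU(A)=\exp A$ if $p>2$ and $\exp\GU(A)\in\{\exp A,\,2\exp A\}$ if $p=2$, by exhibiting an explicit class-$2$ cover of that exponent and invoking the minimality of $\GU$. Combined with Theorem~\ref{thm main}(i), namely $\exp\M(G)\mid\exp\GU(N)\cdot\exp\M(G/N)$, this \emph{is} your peeling lemma; equivalently, iterating Theorem~\ref{thm main}(ii) along the derived series gives the theorem at once. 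The missing idea is thus to track $\exp\GU$, not $\exp\M$, through the series: $\exp\GU$ behaves multiplicatively in the sense of Theorem~\ref{thm main}(ii) and is exactly computable on abelian factors, whereas bounding $\exp\ker(\M(G)\to\M(Q))$ directly runs into the difficulties you describe.
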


\emph{Comparison with \eqref{eq moravec bound}}:
in case $p>2$, the bounds coincide for $d=2$ and the improvement occurs for $d>2$;
in case $p=2$, it is non-efficient for $d=2$, the bounds coincide for $d=3$ and $\exp G=4$, and the improvement occurs in all the other cases. 

\emph{Comparison with \eqref{eq ellis bound} via \eqref{eq derived length and nilpotency class}}:
in case $p>2$, the bounds coincide for $c=4,5,6$ and $c=8$, and the improvement occurs for $c=7$ or $c\geq 9$;
in case $p=2$ and $\exp G=4$, the bounds coincide for $c=7$, and the improvement occurs for $c\geq 11$;
in case $p=2$ and $\exp G>4$, the bounds coincide for $c=7$, and the improvement occurs for $c\geq 9$.

The difference between the odd and the even case in Theorem \ref{thm derived length} can be explained with the concept of unitary cover (\S\ref{section technique}), based on the theory of central extensions and projective representations (cf.~\cite[\S 9]{isaacs}, and \S\ref{section background} hereby).
\begin{thmi}\label{thm main}
There exists a canonical element $\GU(G)$, the \emph{unitary cover} of $G$, which has minimal exponent in the set of central extensions of $G$ satisfying the projective lifting property.
The map $\GU$, associating to a group its unitary cover, satisfies for any normal subgroup $N$ of $G$ the following properties:
\begin{enumerate}[i)]
 \item $\exp\M(G)\ \ |\ \ \exp\GU(N)\cdot\exp\M(G/N)$
 \item $\exp\GU(G)\ \ |\ \ \exp\GU(N)\cdot\exp\GU(G/N)$
 \item $\GU(G/N)$ is a homomorphic image of $\GU(G)$ .
\end{enumerate}
Moreover, if $G=N\rtimes H$, then
\begin{enumerate}[i)]
 \item[iv)] $\exp\M(G)\ \ |\ \ \lcm\{\exp\GU(N),\ \exp\M(H)\}$
 \item[v)] $\exp\GU(G)\ \ |\ \ \lcm\{\exp\GU(N)\cdot\exp H,\ \exp\GU(H)\}$ .
\end{enumerate}
\end{thmi}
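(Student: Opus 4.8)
The plan is to build $\GU(G)$ inside a free presentation, read off its universal property there, and then deduce i)--v) by diagram chasing with Hopf's formula. Fix $1\to R\to F\xrightarrow{\pi}G\to1$ with $F$ free of finite rank. Then $\Gamma_F:=F/[F,R]$ is a central extension of $G$ by $\bar R:=R/[F,R]$, whose torsion subgroup is $\M(G)=(R\cap F')/[F,R]$ by Hopf's formula, and, $F$ being free, $\Gamma_F$ surjects onto every central extension of $G$ generated by at most $\operatorname{rk}F$ elements over its kernel. A cocycle computation shows that a central extension $1\to A\to\Gamma\to G\to1$ has the projective lifting property precisely when the homomorphism $\M(G)\to A$ classifying it is injective: a $2$-cocycle $\alpha$ of $G$ lifts to $\Gamma$ iff $[\alpha]$ lies in the image of $\Hom(A,\C^\times)\to\M(G)$, and that image is all of $\M(G)$ iff the dual of the classifying map is surjective; hence $\exp\Gamma$ is then a common multiple of $\exp G$ and $\exp\M(G)$. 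Writing $\mho_m(\Gamma_F)=\gen{x^m:x\in\Gamma_F}$, I claim the set of $m$ for which $\Gamma_F/\mho_m(\Gamma_F)$ retains the projective lifting property --- equivalently $\mho_m(\Gamma_F)\cap\bar R$ avoids $\M(G)$ --- is closed under greatest common divisors, the point being that this condition can be tested one prime at a time on suitable finite quotients. Consequently it has a least element $e_0(G)$ which divides the exponent of \emph{every} projective-lifting central extension of $G$, and $\GU(G):=\Gamma_F/\mho_{e_0(G)}(\Gamma_F)$ is such an extension, of exponent exactly $e_0(G)$, hence of minimal exponent among them. Being cut out by a verbal subgroup it is independent of $F$: two presentations yield $\Gamma_F$'s differing by a central free-abelian direct factor, which the $e_0(G)$-th power subgroup annihilates.

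For the functorial properties, fix $N\trianglelefteq G$ and arrange the presentation so that $S:=\pi^{-1}(N)$ satisfies $R\le S\le F$; then $S$ is free, $1\to R\to S\to N\to1$ presents $N$, and $1\to S\to F\to G/N\to1$ presents $G/N$. Property iii) is formal: since $[F,R]\le[F,S]$, the natural surjection $\Gamma_F\twoheadrightarrow F/[F,S]$ respects power subgroups, so it descends to a surjection from $\GU(G)$ onto the projective-lifting cover of $G/N$ got from $F/[F,S]$ by killing its $e_0(G)$-th powers; as that cover has exponent dividing $e_0(G)$ and $e_0(G/N)$ divides it, a further quotient map lands on $\GU(G/N)$. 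For ii), apply the classical ``covering of an extension'': the conjugation action of $G/N$ on $N$ lifts through the canonical cover $\GU(N)$, the ensuing $\Hn^3$-obstruction is killed on pulling back along $\GU(G/N)\to G/N$, and one assembles a projective-lifting central extension $\widehat G$ of $G$ fitting in $1\to\GU(N)\to\widehat G\to\GU(G/N)\to1$; then $\exp\widehat G\mid\exp\GU(N)\cdot\exp\GU(G/N)$, and, $e_0(G)$ dividing $\exp\widehat G$, this is ii). For i), run the Lyndon--Hochschild--Serre spectral sequence of $1\to N\to G\to G/N\to1$: the group $\M(G)=H_2(G)$ is filtered with graded pieces a subgroup of $\M(G/N)$, a subquotient of $H_1(G/N,N^{\mathrm{ab}})$, and a quotient of $H_2(N)$; the last two assemble --- through the cover --- into a group that embeds in $\GU(N)$ (a homology class of $G$ dying in $\M(G/N)$ is ``supported on $N$'' and is recorded in the canonical cover, whose exponent was designed to absorb $\exp\M(N)$ and $\exp N^{\mathrm{ab}}$ at once), so $\ker(\M(G)\to\M(G/N))$ has exponent dividing $\exp\GU(N)$, and i) follows on also accounting for the $\M(G/N)$-piece.

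Finally let $G=N\rtimes H$, and take $F$ to be the free product of free presentations of $N$ and $H$ with the relations $hnh^{-1}=n^h$ adjoined. A choice of complement splits $\M(H)$ off $\M(G)$ as a direct summand whose complement is a quotient of $H_1(H,N^{\mathrm{ab}})$ and $H_2(N)_H$, hence of exponent dividing $\exp\GU(N)$; this sharpens the product bound of i) to the $\lcm$ bound iv). On the cover side the same splitting lets one assemble $\widehat G$ from $\GU(N)$ carrying the lifted (possibly twisted) $H$-action together with a complement equal to $\GU(H)$; since adjoining an action of $H$ multiplies the exponent by at most $\exp H$, a prime-by-prime comparison of these two parts gives $\exp\widehat G\mid\lcm\{\exp\GU(N)\cdot\exp H,\ \exp\GU(H)\}$, and minimality of $e_0(G)$ yields v). I expect the main obstacles to be the gcd-closure and presentation-independence in the first paragraph, and the embedding $\ker(\M(G)\to\M(G/N))\hookrightarrow\GU(N)$ underpinning i) and iv); the rest is bookkeeping with Hopf's formula, verbal subgroups, and the five-term sequence.
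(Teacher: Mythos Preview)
Your approach is genuinely different from the paper's and the gaps you flag are real.

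The paper never touches a free presentation. It defines the \emph{unitary cocycles} $\ZU(G)\subset Z^2(G,\C^\times)$ by the condition $\prod_{j=0}^{o(g)-1}\alpha(g,g^j)=1$ for all $g$, shows every class in $\M(G)$ has a unitary representative, and sets $\GU(G)=\check{\ZU(G)}\propto G$ via the Schur-type construction. This is canonical because no choice is made. Your $\Gamma_F/\mho_{e_0}(\Gamma_F)$ is \emph{not} independent of $F$: adding one free generator to $F$ tensors your group with an extra central $\mathbb{Z}/e_0$. The number $e_0(G)$ does equal the paper's $\exp\GU(G)$ (since $\Gamma_F$ surjects onto $\GU(G)$, and conversely $\Gamma_F/\mho_{e_0}$ has the projective lifting property), so your exponent claims survive, but the word ``canonical'' and hence the literal statement of iii) do not.

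The substantive gap is in i). The Lyndon--Hochschild--Serre filtration only tells you that $\ker\bigl(\M(G)\to\M(G/N)\bigr)$ is a two-step extension of a subquotient of $H_1(G/N,N^{\mathrm{ab}})$ by a quotient of $H_2(N)_{G/N}$; that bounds its exponent by $\exp N\cdot\exp\M(N)$, which can strictly exceed $\exp\GU(N)$. For $N=C_{p^2}\times C_{p^2}$ with $p$ odd one has $\exp N=\exp\M(N)=p^2$ but, by the paper's Lemma~C, $\exp\GU(N)=p^2$, so your bound is $p^4$ against the claimed $p^2$. Your asserted embedding of the kernel into $\GU(N)$ is thus exactly the missing content, and nothing in your outline produces it. The paper obtains i) from a pointwise identity peculiar to unitary cocycles, namely $\beta(n,g)^{o(n)}=\beta(g,n^g)^{o(n)}$ for $\beta\in\ZU(G)$; combined with $(\beta_N)^{\exp\ZU(N)}=1$ this forces, via an invariant-idempotent argument in $\C^{\beta^r}[G]$, that $[\beta^r]$ is inflated from $G/N$ once $r=\exp\GU(N)$. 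I do not see a spectral-sequence substitute for this identity.

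Your sketch of ii) has the analogous problem: you build $\widehat G$ as an extension of $\GU(G/N)$ by $\GU(N)$ but never verify that $\widehat G$ has the projective lifting property for $G$, which is what makes the minimality of $e_0(G)$ bite. The paper instead proves ii) by an explicit factorisation: for $\alpha\in\ZU(G)$ and $r=\exp\GU(N)$ it exhibits $\alpha^r=\gamma^\ast\cdot\delta\vartheta^{-1}$ with $\gamma\in\ZU(G/N)$ and $\vartheta(g)^{o(gN)}=1$, again using the unitary condition in an essential way.
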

By minimality, one can eventually replace the unitary cover with any central extension satisfying the projective lifting property, for instance with any Schur cover.
The word ``canonical'' refers to the fact that $\GU(G)$ is uniquely defined, whereas two Schur covers need not to be isomorphic.

We determine the exponent of the unitary cover for abelian $p$-groups, and in case $p>2$ we extend this result for powerful $p$-groups (introduced in \S \ref{section exp G divides exp MG}).
Readily, we describe other families of groups for which \eqref{eq exp MG divides exp G} holds by Theorem \ref{thm main}.
\begin{lemi}\label{lem abelian 2-groups and powerful p-groups}
The following holds.
\begin{enumerate}[i)]
 \item If $G$ is a powerful $p$-group for $p>2$, then $\exp\GU(G)=\exp G$.
 \item If $G$ is an abelian $2$-group of exponent $n$.
Then $$\exp\GU(G)=2^\sigma\cdot\exp G$$ for \hspace{5mm} $\sigma=\left\{\begin{array}{ll}1&\mbox{if $G$ has a subgroup isomorphic with $C_n\times C_n$}\\0&\mbox{otherwise.}\end{array}\right.$
\end{enumerate}
\end{lemi}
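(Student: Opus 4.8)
The plan is to pin the upper bounds by exhibiting an explicit ``Heisenberg type'' Schur cover, whose exponent then dominates $\exp\GU$ by minimality, and to match this from below using the properties collected in Theorem~\ref{thm main} together with the description of the projective lifting property for central extensions of abelian groups. Throughout, $\exp G\mid\exp\GU(G)$ comes for free, since $\GU(G)$ surjects onto $G$; so only the reverse divisibility is at stake (and, in case ii), only the missing factor $2$).

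For the common core let $G$ be abelian with invariant factors as in \eqref{eq cyclic decomposition}, and form the class-$\le 2$ group $\tilde G$ with underlying set $G\times\wedge^2 G$ and product $(g,c)(g',c')=(g+g',\,c+c'+\beta(g,g'))$, where $\beta$ is a fixed bilinear map with $\beta(g,g')-\beta(g',g)=g\wedge g'$. Then $\tilde G'=\wedge^2 G=\M(G)$ is central, so $\tilde G$ is a Schur cover and in particular satisfies the lifting property; hence $\exp\GU(G)\mid\exp\tilde G$. From $(g,c)^m=(mg,\,mc+\binom{m}{2}\beta(g,g))$ and $\exp\M(G)=d_{n-1}\mid\exp G$ one reads off $\exp\tilde G$ through the $p$-adic valuation of $\binom{m}{2}$ at $m=\exp G$: for $p$ odd $v_p\binom{p^e}{2}=e$, which forces $\binom{\exp G}{2}\beta(g,g)=0$ and $\exp\tilde G=\exp G$; for $p=2$, $v_2\binom{2^e}{2}=e-1$, so a residual obstruction of order $2$ survives precisely when some $\beta(g,g)$ attains order $\exp G$, i.e.\ when $d_{n-1}=\exp G$, giving $\exp\tilde G=2^\sigma\exp G$. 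This already yields the upper bounds $\exp\GU(G)\mid\exp G$ for abelian $p$-groups with $p$ odd and $\exp\GU(G)\mid 2^\sigma\exp G$ for abelian $2$-groups; in particular ii) is complete when $\sigma=0$, and i) is established on abelian $p$-groups ($p$ odd), which will serve as the base case below.

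For the lower bound in ii) when $\sigma=1$, I would take \emph{any} central extension $1\to A\to\Gamma\to G\to1$ with the lifting property. As $G$ is abelian, $\Gamma'\le A\le Z(\Gamma)$, so $\Gamma$ has class $\le 2$, and by the characterization of the lifting property recalled in \S\ref{section background} the commutator map induces an injection $\wedge^2 G\hookrightarrow\Gamma'$. Choosing lifts $x_1,\dots,x_n\in\Gamma$ of the generators in \eqref{eq cyclic decomposition}, the element $z=[x_{n-1},x_n]$ is the image of $e_{n-1}\wedge e_n$ and hence has order $\gcd(d_{n-1},d_n)=\exp G$. If $\exp\Gamma$ divided $\exp G$ we would get $x_{n-1}^{\exp G}=x_n^{\exp G}=(x_{n-1}x_n)^{\exp G}=1$, whence the class-$2$ power identity gives $z^{\binom{\exp G}{2}}=1$; but $v_2\binom{\exp G}{2}=v_2(\exp G)-1$, so $z^{\binom{\exp G}{2}}\ne 1$ --- a contradiction. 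Applying this to $\Gamma=\GU(G)$ and combining with the already established $\exp\GU(G)\mid 2\exp G$ forces $\exp\GU(G)=2\exp G$. (Alternatively one may first reduce to $G=C_n\times C_n$, which is a quotient of $G$ exactly when $\sigma=1$, via property iii) of Theorem~\ref{thm main}, and run the same computation there.)

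It then remains to complete i) for powerful $p$-groups with $p$ odd, by induction on $|G|$, the abelian case being the base step settled above (this covers in particular $G=1$ and all elementary abelian $G$). For $G\ne1$ set $N=\mho(G)$; by standard facts on powerful $p$-groups $N=\{g^p:g\in G\}$ is a proper powerful characteristic subgroup with $\exp N=\exp(G)/p$, while $G/N$ is abelian of exponent $p$ (since $G'\le\mho(G)=N$), hence elementary abelian, so $\exp\GU(G/N)=p$ by the abelian case and $\exp\GU(N)=\exp N$ by induction. Property ii) of Theorem~\ref{thm main} then yields
$$\exp\GU(G)\ \mid\ \exp\GU(N)\cdot\exp\GU(G/N)\ =\ \frac{\exp G}{p}\cdot p\ =\ \exp G .$$
The step I expect to be the main obstacle is the lower bound in ii): everything hinges on knowing the lifting property sharply enough to guarantee that $[x_{n-1},x_n]$ realizes the full exponent of $G$ in \emph{every} such extension, not merely in a convenient Schur cover; once that is in place, the arithmetic of $\binom{\exp G}{2}$ and the invoked structure theory of powerful $p$-groups are routine.
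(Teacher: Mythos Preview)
Your argument is correct and follows the same overall architecture as the paper: an explicit class-$2$ Schur cover for the abelian case to get the upper bound, then induction through $\mho(G)$ and Theorem~\ref{thm main}~ii) for powerful $p$-groups with $p$ odd.

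The one place where you genuinely diverge is the lower bound in ii) for $\sigma=1$. The paper simply asserts that $\exp\ZU(A)=\exp\Gamma$ (a direct computation of the exponent of the unitary cocycle group) and then reads off $\exp\GU(A)$ from Lemma~\ref{lem unitary covers}. You instead avoid touching $\ZU$ at all: you show that in \emph{any} central extension $\Gamma$ with the lifting property the commutator pairing realizes $\wedge^2 G\simeq\Gamma'$, so $[x_{n-1},x_n]$ has full order $\exp G$, and then the class-$2$ power identity together with $v_2\binom{2^e}{2}=e-1$ forces $\exp\Gamma>\exp G$. This is a clean alternative; it trades the cocycle computation for a short group-theoretic argument and makes the role of the condition $d_{n-1}=d_n$ completely explicit. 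The cost is that you must justify the isomorphism $\wedge^2 G\to\Gamma'$ via the commutator map (surjective by construction, injective because $|\Gamma'|=|A\cap\Gamma'|=|\M(G)|=|\wedge^2 G|$), whereas the paper's route sidesteps this by working inside $\ZU$.
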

\begin{cori}\label{cori new family}
Let $G$ be a $p$-group, and $N$ a normal subgroup of $G$. Assume one of the following:
$$\left\{\begin{array}{ll}
          p=2&\mbox{$N$ is abelian with no subgroups isomorphic with $C_{2^k}\times C_{2^k}$,}\\&\mbox{where $2^k$ is the exponent of $G$.}\\
	  p>2&\mbox{$N$ is a powerful $p$-group}.
         \end{array}\right.$$
And assume one of the following:
$$\left\{\begin{array}{l}\M(G/N)=1\ .\\G=N\rtimes H \mbox{ where $H$ satisfies \eqref{eq exp MG divides exp G}.}\hspace{57mm}\end{array}\right.$$
Then $G$ satisfies property \eqref{eq exp MG divides exp G}.
\end{cori}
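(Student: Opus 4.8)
The plan is to derive the corollary by feeding Lemma~\ref{lem abelian 2-groups and powerful p-groups} into parts~i) and~iv) of Theorem~\ref{thm main}. The load-bearing intermediate observation is that, under either hypothesis on $N$, one has $\exp\GU(N)\ \mid\ \exp G$. If $p>2$ then $N$ is powerful, so Lemma~\ref{lem abelian 2-groups and powerful p-groups}~i) gives $\exp\GU(N)=\exp N$, and this divides $\exp G$ because $N\leq G$. If $p=2$, write $\exp G=2^{k}$ and $\exp N=2^{j}$ with $j\leq k$; Lemma~\ref{lem abelian 2-groups and powerful p-groups}~ii) gives $\exp\GU(N)=2^{\sigma}\cdot 2^{j}$ with $\sigma\in\{0,1\}$. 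When $j<k$ we have $2^{j+1}\mid 2^{k}$, hence $\exp\GU(N)\mid 2^{j+1}\mid\exp G$ regardless of $\sigma$. When $j=k$, the group $N$ has exponent $2^{k}$, so the hypothesis that $N$ contains no subgroup isomorphic to $C_{2^{k}}\times C_{2^{k}}$ is exactly the condition that forces $\sigma=0$ in Lemma~\ref{lem abelian 2-groups and powerful p-groups}~ii), whence $\exp\GU(N)=2^{k}=\exp G$. Either way, $\exp\GU(N)\mid\exp G$.

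With this in hand I would split on the second dichotomy. If $\M(G/N)=1$, then Theorem~\ref{thm main}~i) yields $\exp\M(G)\mid\exp\GU(N)\cdot\exp\M(G/N)=\exp\GU(N)$, which divides $\exp G$ by the observation above, so \eqref{eq exp MG divides exp G} holds for $G$. If instead $G=N\rtimes H$ with $H$ satisfying \eqref{eq exp MG divides exp G}, then Theorem~\ref{thm main}~iv) gives $\exp\M(G)\mid\lcm\{\exp\GU(N),\exp\M(H)\}$; here $\exp\GU(N)\mid\exp G$ by the observation, while $\exp\M(H)\mid\exp H\mid\exp G$ since $H$ embeds into $G$ as a complement. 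Both entries of the $\lcm$ divide $\exp G$, hence so does the $\lcm$, and therefore $\exp\M(G)\mid\exp G$.

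The only genuinely delicate point is the accounting in the $p=2$ case: the corollary phrases the exceptional configuration in terms of $\exp G$, whereas Lemma~\ref{lem abelian 2-groups and powerful p-groups}~ii) is phrased in terms of $\exp N$, and one must notice that the two coincide precisely when $\exp N=\exp G$ and that a strictly smaller $\exp N$ makes the potential extra factor of $2$ harmless because it is still absorbed into $\exp G$. It is also worth remarking why part~iv) rather than part~i) is used in the semidirect case: the product $\exp\GU(N)\cdot\exp\M(H)$ of two divisors of $\exp G$ need not divide $\exp G$, so the $\lcm$ in~iv) is essential. Beyond this, the argument is a direct substitution into Theorem~\ref{thm main}, so I do not expect any substantial obstacle.
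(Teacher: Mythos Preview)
Your proof is correct and follows exactly the route the paper intends: the corollary is stated without proof precisely because it is a direct combination of Theorem~\ref{thm main}~i),~iv) with Lemma~\ref{lem abelian 2-groups and powerful p-groups}, and you carry this out cleanly. Your careful handling of the $p=2$ case---distinguishing $\exp N<\exp G$ (where the possible extra factor of $2$ is absorbed) from $\exp N=\exp G$ (where the hypothesis on $C_{2^k}\times C_{2^k}$ forces $\sigma=0$)---is the one point that genuinely needs spelling out, and you do so correctly.
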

At least for groups of odd order, the previous result generalizes the case of abelian-by-cyclic groups to powerful-by-trivial multiplier groups, and it reveals a closure property under semidirect products with powerful kernels.

Our next result concerns regular $p$-groups, which constitute one of the most important family of $p$-groups and were introduced by P.~Hall in 1934 \cite[\S 4]{hall}.
A $p$-group $G$ is \emph{regular} if for every $x,y\in G$ there exist $c\in\gen{x,y}'$ such that $(xy)^p=x^py^pc^p$.
Abelian $p$-groups are regular, regular 2-groups are abelian, and regular $p$-groups share important properties with abelian groups for any $p$.

Many families of groups for which \eqref{eq exp MG divides exp G} has been proven consist of regular $p$-groups, at first abelian $p$-groups and $p$-groups of class lower than $p$ (cf.~\cite[p.~98]{berkovich} and \S\ref{section exp G divides exp MG}).
On the other hand, if $P=G/\mho(G)$ belongs to some of such classes, then $G$ is regular and it also satisfies \eqref{eq exp MG divides exp G}.
We refer to the first Hall criterion claiming that if $|P/\mho(P)|<p^p$, then $G$ is regular and it is said \emph{absolutely regular}.
\begin{ppsi}\label{pps regular p-groups}
If $G$ is a regular $p$-group and $\exp\M(G/\mho(G))$ divides $p$, then $G$ satisfies \eqref{eq exp MG divides exp G}.
Moreover, \eqref{eq exp MG divides exp G} holds for groups of exponent $p$ iff it holds for regular $p$-groups.
In particular, absolutely regular $p$-groups enjoy this property, and in general regular 3-groups.
\end{ppsi}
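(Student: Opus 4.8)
The plan is to deduce the ``iff'' and the two ``in particular'' statements from the first assertion, and to prove the first assertion by one application of Theorem~\ref{thm main}(i). For $p=2$ everything is classical: a regular $2$-group is abelian, and abelian groups — in particular groups of exponent $2$ — satisfy \eqref{eq exp MG divides exp G}. So from now on $p$ is odd.

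\emph{Reductions.} A group of exponent $p$ is regular, since there $(xy)^p,x^p,y^p$ are all trivial and Hall's identity holds with $c=1$; hence \eqref{eq exp MG divides exp G} for all regular $p$-groups implies it for all groups of exponent $p$. Conversely, for any regular $p$-group $G$ the quotient $G/\mho(G)$ has exponent dividing $p$, so if groups of exponent $p$ satisfy \eqref{eq exp MG divides exp G} then $\exp\M(G/\mho(G))\mid p$ and the first assertion applies to $G$; this gives the ``iff''. For the remaining claims I would only verify $\exp\M(G/\mho(G))\mid p$. If $G$ is absolutely regular then $|G/\mho(G)|<p^p$, so $G/\mho(G)$ has order $p^k$ with $k<p$, hence class less than $p-1$; by Moravec's theorem on $p$-groups of class below $p-1$ it satisfies \eqref{eq exp MG divides exp G}, so $\exp\M(G/\mho(G))\mid p$. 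If $p=3$ then $G/\mho(G)$ has exponent $3$, hence class at most $3$, and $\exp\M(G/\mho(G))\mid 3$ as recalled in the introduction.

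\emph{The first assertion.} Write $\exp G=p^e$ and set $N=\mho(G)$. The key point is that $N=\mho_1(G)$ is a \emph{powerful} $p$-group. Granting this, Lemma~\ref{lem abelian 2-groups and powerful p-groups}(i) gives $\exp\GU(N)=\exp N=p^{e-1}$, and Theorem~\ref{thm main}(i) applied to $N\trianglelefteq G$ yields
$$\exp\M(G)\ \ \big|\ \ \exp\GU(N)\cdot\exp\M(G/N)\ \ =\ \ p^{e-1}\cdot\exp\M\!\big(G/\mho(G)\big)\ \ \big|\ \ p^{e-1}\cdot p\ \ =\ \ \exp G ,$$
the hypothesis $\exp\M(G/\mho(G))\mid p$ entering at the final divisibility. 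No induction is needed.

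\emph{Main obstacle.} What is left is to show that in a regular $p$-group with $p$ odd the subgroup $\mho(G)=\mho_1(G)$ is powerful, equivalently that $[\mho_1(G),\mho_1(G)]\subseteq\mho_1(\mho_1(G))=\mho_2(G)$. This is a standard fact from Hall's theory of regular $p$-groups: passing to $G/\mho_2(G)$ it reduces to the case $\exp G\le p^2$, where one reads $\mho_1(G)$ abelian off Hall's power/commutator identities for regular groups; for a self-contained account this should be a short preliminary lemma. The auxiliary identities $\exp\mho_1(G)=(\exp G)/p$ and $\mho(G/N)=\mho(G)/N$ are routine for regular $p$-groups. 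The need to treat $p=2$ separately is precisely the failure of Lemma~\ref{lem abelian 2-groups and powerful p-groups}(i) for $p=2$ — the same odd/even asymmetry as in Theorem~\ref{thm derived length}.
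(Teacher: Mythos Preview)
Your proposal is correct and follows essentially the same route as the paper: apply Theorem~\ref{thm main}(i) with $N=\mho(G)$, use that $\mho(G)$ is powerful for $p>2$ (the paper cites \cite[p.~497]{lubotzkymann} for this, you sketch it from Hall's theory) together with Lemma~\ref{lem abelian 2-groups and powerful p-groups}(i) and $\exp\mho(G)=\exp G/p$, and then verify $\exp\M(G/\mho(G))\mid p$ in the absolutely regular and $p=3$ cases exactly as you do. Your bound $\cl(G/\mho(G))<p-1$ in the absolutely regular case is in fact slightly sharper than the paper's stated $\cl<p$, and is the one that matches the cited Moravec criterion.
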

We shall now prove the bound concerning the derived length, since we obtain the result in its stronger versions,
first involving any subnormal series, then involving abelian 2-groups and powerful $p$-groups for $p>2$.
\begin{proof}[\bf Proof of Theorem \ref{thm derived length}]
By iteration of Theorem \ref{thm main}, if a group $G$ admits a subnormal series
\begin{equation}\label{eq subnormal series}
G=G_0 > G_1 > \cdots > G_{r-1} > G_r=1\ ,\ G_i\trianglelefteq G_{i-1}\ ,\ Q_i=G_{i-1}/G_{i}\ ,
\end{equation}
then $$\exp\GU(G)\ \ |\ \ \prod_{j=1}^r\exp\GU(Q_i)\ .$$
By Lemma \ref{lem abelian 2-groups and powerful p-groups}, we have respectively:

\vspace{0.2cm}
\noindent I. Let $G$ be a $p$-group for $p>2$. Assume $G$ admits a subnormal series \eqref{eq subnormal series} where $Q_i$ are powerful $p$-groups. Then
$$\exp\M(G)\ \ |\ \ \prod_{j=2}^r\exp Q_j\cdot\exp\M(Q_1)\ .$$
II. Let $G$ be a $2$-group. Assume $G$ admits a subnormal series \eqref{eq subnormal series} where $Q_i$ are abelian.
Then
$$\exp\M(G)\ \ |\ \ 2^{|I|}\cdot\prod_{j=2}^r\exp Q_j\cdot\exp\M(Q_1)\ .$$
where $I\subseteq\{2,\ldots,r\}$ is such that $k\in I$ iff $Q_k$ has a subgroup isomorphic with $C_{e_k}\times C_{e_k}$ for $e_k=\exp Q_{k}$.

\vspace{0.2cm}
These bounds prove Theorem \ref{thm derived length} considering the derived series, so that the factor $Q_i$ are abelian.
In case $p>2$ apply I, as abelian $p$-groups are powerful.
In case $p=2$ apply II and substitute $|I|$ with $d-1$.
Notice that $\exp Q_k$ divides $\exp G$ for every $k$, as well as $\exp\M(Q_1)$ divides $\exp G$ since $Q_1=G/G'$ is abelian.
\end{proof}
We expose our alternative proof that the study of the exponent of the multiplier can be restricted in some extent to 2-generator groups.
\begin{ppsi}\label{pps 2 generator subgroups cover}
Let $\Sp(G)$ denote the set of 2-generator subgroups of $G$, then
$$\exp\GU(G)\ \ |\ \ \underset{S\in\Sp(G)}{\lcm} \exp\GU(S)\ .$$
\end{ppsi}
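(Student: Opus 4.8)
The plan is to reduce the statement to a computation about the Schur multiplier, using the defining property of the unitary cover $\GU$ as the central extension of minimal exponent satisfying the projective lifting property. First I would recall the classical fact (Schur's work, cf.~\cite[\S 9]{isaacs}) that the exponent of any Schur cover, hence of $\GU(G)$ by minimality, is determined by $\exp G$ together with $\exp\M(G)$: concretely, $\exp\GU(G)$ divides $\exp G\cdot\exp\M(G)$, and more precisely $\GU(G)$ fits into $1\to\M(G)\to\GU(G)\to G\to 1$ with the multiplier in the centre and in the commutator subgroup. So it suffices to control $\exp\M(G)$ and $\exp G$ by the corresponding quantities over $S\in\Sp(G)$. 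The divisibility $\exp G\mid \lcm_{S}\exp S$ is trivial since every element lies in a cyclic, a fortiori $2$-generator, subgroup; the content is therefore the multiplier part.

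The core step is the well-known fact that the Schur multiplier is detected by $2$-generator subgroups in the following sense: for every $g,h\in G$ the commutator pairing $\M(G)$ receives contributions that already live in $\M(\gen{g,h})$ via the corestriction (transfer) or, dually, the restriction maps. More usefully here, I would invoke the presentation $\M(G)\cong (R\cap [F,F])/[F,R]$ (Hopf's formula) for a free presentation $1\to R\to F\to G\to 1$: an element of $\M(G)$ is represented by a product of commutators in $F$ that is trivial in $G$, and by collecting the finitely many generators occurring in such a word one sees that it is already visible in the multiplier of a suitable $2$-generator — in fact finitely generated — subgroup. Combined with the standard bound $\exp\M(G)\mid\lcm_{H}\exp\M(H)$ as $H$ ranges over subgroups generated by pairs (this is the multiplicative analogue of property i) of Theorem~\ref{thm main} applied to the filtration by $2$-generated pieces), one gets $\exp\M(G)\mid \lcm_{S\in\Sp(G)}\exp\M(S)$.

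Assembling: $\exp\GU(G)$ divides $\exp G\cdot\exp\M(G)$, which divides $\bigl(\lcm_S\exp S\bigr)\cdot\bigl(\lcm_S\exp\M(S)\bigr)$, and since for each $S$ the product $\exp S\cdot\exp\M(S)$ is divisible by $\exp\GU(S)$ — again by the structure of central extensions — a short divisibility bookkeeping gives $\exp\GU(G)\mid\lcm_{S\in\Sp(G)}\exp\GU(S)$. One subtlety to handle is that $\lcm_S(a_Sb_S)$ need not equal $(\lcm_S a_S)(\lcm_S b_S)$, so the argument should be run primewise: fix a prime $q$, pick $S$ realizing the maximal $q$-part of $\exp G$ and $S'$ realizing the maximal $q$-part of $\exp\M(G)$, and enlarge to a single $2$-generator subgroup containing enough of both — here one genuinely needs that two prescribed elements plus a multiplier class sit inside one $2$-generator subgroup, which is why the Hopf-formula argument in the previous paragraph must be stated for the pair of elements witnessing the relevant orders.

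The hard part will be the second paragraph: making precise that a chosen class in $\M(G)$ of maximal $q$-order is already realized in the multiplier of a $2$-generator subgroup, and doing so compatibly with a simultaneously chosen element of maximal $q$-order in $G$. I expect this to follow from the Hopf formula together with the fact that $\M$ commutes with directed colimits, reducing to finitely generated subgroups and then, by a further reduction using Theorem~\ref{thm main}(i)–(ii) along a chief-type series of that finitely generated subgroup, down to the $2$-generator case; the bookkeeping to keep the element and the multiplier class inside the \emph{same} $2$-generator subgroup is the only genuinely delicate point.
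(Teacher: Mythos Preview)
Your plan has a real gap at the ``core step.'' The claim
\[
\exp\M(G)\ \big|\ \underset{S\in\Sp(G)}{\lcm}\ \exp\M(S)
\]
is never established, and the sketched routes do not get there. The Hopf-formula idea does not reduce to two generators: a relator in $R\cap[F,F]$ representing a class of large order will in general involve many free generators, and there is no ``collecting'' procedure that produces a single $2$-generator subgroup $S$ such that the class survives under $\res:\M(G)\to\M(S)$; restriction can kill classes. The fallback via Theorem~\ref{thm main}(i)--(ii) along a chief series gives bounds in terms of the \emph{factors} of a series, not in terms of $2$-generator subgroups of $G$. Finally, the bookkeeping worry you raise is genuine and your proposed fix---putting a witness for $\exp G$ and a witness for a multiplier class inside one $2$-generator subgroup---has no reason to be possible.

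The paper's argument avoids $\M(G)$ entirely and works directly with $\ZU(G)$, which is what makes it short. By Lemma~\ref{lem unitary covers}, $\exp\GU(G)=\lcm\{\exp\ZU(G),\exp G\}$, and $\exp G$ is trivially handled by cyclic subgroups. For $\ZU(G)$ the point is that an element $\alpha\in\ZU(G)$ is a \emph{function} $G\times G\to\C^\times$ with values roots of unity, so its order as a group element is attained at a single pair: $o(\alpha)=o(\alpha(x,y))$ for some $x,y\in G$. Put $S=\gen{x,y}$. The restriction $\alpha_S$ is again unitary (the unitary condition only involves cyclic subgroups), and $o(\alpha_S)=o(\alpha)$ since $\alpha_S$ still takes the value $\alpha(x,y)$. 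Hence $o(\alpha)\mid\exp\ZU(S)$, and letting $\alpha$ run over $\ZU(G)$ gives $\exp\ZU(G)\mid\lcm_{S}\exp\ZU(S)$. No cohomological detection statement, no Hopf formula, no primewise bookkeeping is needed. The moral: the passage to $2$-generator subgroups happens at the level of cocycle \emph{values}, not cohomology classes, and that is precisely what the unitary-cocycle formalism is set up to exploit.
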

For a fixed positive integer $n$, let $\Sp(n)$ denote the set of isomorphism classes of 2-generator groups whose exponent divides $n$.
Substituting $\Sp(G)$ with $\Sp(\exp G)$ in the bound of Proposition \ref{pps 2 generator subgroups cover}, we obtain a bound depending only on the exponent of the group (cf.~\cite[Pr.~2.4]{moravec}).

We assume that $\exp G=p^k$, by the Zel'manov solution of the restricted Burnside problem \cite{zelmanov}, \cite{zelmanov2}, there exists a finite group $$\Zel{k}=\mbox{RBP}(2,p^k)$$ such that every element in $\Sp(p^k)$ is a homomorphic image of $\Zel{k}$.
Therefore, by Theorem \ref{thm main} we have that $\exp\GU(S)$ divides $\exp\GU(\Zel{k})$ for every $S$ in $\Sp(p^k)$, and we can also add some information to this result.
\begin{ppsi}\label{pps exponent of zelmanov}
If $G$ is a group of exponent $p^k$, then $$\exp\GU(G)\ \ |\ \ \exp\GU(\Zel{k})\ .$$
Moreover, $$\exp\GU(\Zel{k})=p^k\cdot\exp\M(\Zel{k})$$ and $$p^{k}\ \ |\ \ \exp\M(\Zel{k})\ .$$
\end{ppsi}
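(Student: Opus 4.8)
The plan is to establish the three assertions separately; the first is immediate, the middle one uses the maximality of $\Zel{k}$, and the last is the delicate point.

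\emph{The divisibility $\exp\GU(G)\mid\exp\GU(\Zel{k})$.} I would deduce this from Proposition~\ref{pps 2 generator subgroups cover} together with the Zel'manov solution. Every $S\in\Sp(G)$ satisfies $\exp S\mid\exp G=p^{k}$, hence is a homomorphic image of $\Zel{k}=\mathrm{RBP}(2,p^{k})$; by Theorem~\ref{thm main}(iii) the cover $\GU(S)$ is then a homomorphic image of $\GU(\Zel{k})$, so $\exp\GU(S)\mid\exp\GU(\Zel{k})$. Taking the least common multiple over $S\in\Sp(G)$ and applying Proposition~\ref{pps 2 generator subgroups cover} yields the claim; note that only $\exp G\mid p^{k}$, not $2$-generation of $G$, is used.

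\emph{The identity $\exp\GU(\Zel{k})=p^{k}\exp\M(\Zel{k})$.} I would realise the unitary cover as a stem extension $1\to A\to\GU(\Zel{k})\xrightarrow{\pi}\Zel{k}\to1$ with $A\le\GU(\Zel{k})'$ and $\Hom(A,\C^{\times})$ onto $\M(\Zel{k})$ (the construction of \S\ref{section technique}). Then $A\le\GU(\Zel{k})'\le\Phi(\GU(\Zel{k}))$, so $\GU(\Zel{k})$ is $2$-generated because $\Zel{k}$ is. Since $\exp\Zel{k}=p^{k}$, the verbal subgroup $\GU(\Zel{k})^{p^{k}}$ generated by all $p^{k}$-th powers lies in $\ker\pi=A$; hence $\GU(\Zel{k})/\GU(\Zel{k})^{p^{k}}$ is a $2$-generator group of exponent dividing $p^{k}$, thus a homomorphic image of $\Zel{k}$, while at the same time $\pi$ makes it surject onto $\Zel{k}$ — comparing orders, this surjection is an isomorphism, i.e.\ $\GU(\Zel{k})^{p^{k}}=A$. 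So $A$ is the finite abelian $p$-group generated by the $p^{k}$-th powers of $\GU(\Zel{k})$: writing $\exp A=p^{b}$, some $x^{p^{k}}$ has order $p^{b}$, whence $x$ has order $p^{k+b}$, while every element of $\GU(\Zel{k})$ has $p^{k}$-th power in $A$ and so order dividing $p^{k+b}$. Therefore $\exp\GU(\Zel{k})=p^{k}\exp A$. The very same computation shows $\exp E=p^{k}\exp(\ker(E\to\Zel{k}))$ for every stem cover $E$ of $\Zel{k}$; as a Schur cover realises $\ker\cong\M(\Zel{k})$, which is the least possible value of $\exp(\ker)$ (it is always a quotient of that kernel), minimality of $\GU(\Zel{k})$ forces $\exp A=\exp\M(\Zel{k})$, and the identity follows.

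\emph{The divisibility $p^{k}\mid\exp\M(\Zel{k})$.} By the identity just proved this is equivalent to $p^{2k}\mid\exp\GU(\Zel{k})$, and by the first part it would suffice to produce a single group $Q$ with $\exp Q\mid p^{k}$ and $p^{2k}\mid\exp\GU(Q)$; since $\exp\GU(Q)\mid\exp Q\cdot\exp\M(Q)$ always, this asks for a $Q$ of exponent $p^{k}$ with $\exp\M(Q)\ge p^{k}$ whose unitary cover attains the full product. The natural candidate is $Q=\Zel{k}$ itself: $\GU(\Zel{k})$ is again a $2$-generator $p$-group, now of exponent $p^{k+b}$, hence a homomorphic image of $\Zel{k+b}$, and I would bootstrap the previous argument — feeding this comparison back through Theorem~\ref{thm main}(iii) and the structure of $\GU$ — to bound $b$ from below. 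I expect this last step to be the main obstacle: in general the unitary cover falls far short of the product bound (for instance $\exp\GU(C_{p^{k}}\times C_{p^{k}})=p^{k}$ when $p>2$, by Lemma~\ref{lem abelian 2-groups and powerful p-groups}), so one must genuinely exploit the universality of $\Zel{k}$ — or exhibit an explicit witness group, say of large nilpotency class — to force $\M(\Zel{k})$ to realise the full power $p^{k}$ and not merely to be nontrivial.
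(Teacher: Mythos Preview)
Your first assertion is fine and matches the paper's reasoning exactly.

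There is a genuine error in your argument for the identity $\exp\GU(\Zel{k})=p^{k}\cdot\exp\M(\Zel{k})$. You assert that the unitary cover is a \emph{stem} extension, i.e.\ that the kernel $A=\check{\ZU(\Zel{k})}$ lies inside $\GU(\Zel{k})'$. This is false: by construction $A\cap\GU(\Zel{k})'$ corresponds to $\ZU/\BU\simeq\M$, and $\BU(\Zel{k})\neq 1$ (any map $\zeta:\Zel{k}\to\C^{\times}$ with $\zeta(g)^{o(g)}=1$ produces a unitary coboundary), so $A$ strictly overflows the derived subgroup and $\GU(\Zel{k})$ is \emph{not} $2$-generated. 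Consequently your identification $\mho^{k}(\GU(\Zel{k}))=A$ fails, and with it the equality you derive. The paper's remedy is exactly the missing step: invoke Proposition~\ref{pps central extension and generators} to pass to a $2$-generated subgroup $X\leq\GU(\Zel{k})$ with $X'=\GU(\Zel{k})'$; for this $X$ one has $X/\mho^{k}(X)\simeq\Zel{k}$, whence $\M(\Zel{k})\simeq A\cap X'\leq A\cap X=\mho^{k}(X)$ and $p^{k}\cdot\exp\M(\Zel{k})\leq\exp X\leq\exp\GU(\Zel{k})$. Your Schur-cover computation then gives the reverse inequality via minimality. So your strategy is essentially the paper's, but the passage to $X$ is not optional --- without it the inequality $p^{k}\cdot\exp\M(\Zel{k})\leq\exp\GU(\Zel{k})$ is unproved.

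Your third part is openly incomplete: you correctly reduce to exhibiting a witness but do not produce one, and the ``bootstrap via $\Zel{k+b}$'' you sketch is circular. The paper does supply a concrete witness, and not in the form you anticipate (a group $Q$ of exponent $p^{k}$ with large $\GU(Q)$). Instead it works one level up: for any $l$, the central quotient of $\Zel{k+l}$ by $[\mho^{k}(\Zel{k+l}),\Zel{k+l}]$ embeds $\bigl(\mho^{k}(\Zel{k+l})\cap\Zel{k+l}'\bigr)/[\mho^{k}(\Zel{k+l}),\Zel{k+l}]$ into $\M(\Zel{k})$, and this inequality is inherited by every $2$-generator quotient of $\Zel{k+l}$. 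Taking $l=k$ and the explicit group
\[
G=\gen{x,y\ \big|\ x^{p^{k}}=y^{p^{2k}}=1,\ y^{x}=y^{p^{k}+1}},
\]
one has $G'=\mho^{k}(G)=Z(G)=\gen{y^{p^{k}}}$ of exponent $p^{k}$ and $[\mho^{k}(G),G]=1$, forcing $p^{k}\mid\exp\M(\Zel{k})$. This is precisely the ``explicit witness of large class'' you allude to; without it the proof does not close.
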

Given an account on the theory of central extensions (\S\ref{section central extensions}) and projective representations (\S\ref{section proj representations}), we discuss a generalization of the Schur's construction which proves that covering groups always exist (\S\ref{section schur construction}), then we introduce the unitary cocycles which define the unitary cover (\S\ref{section technique}), finally we prove the encountered results (\S\ref{section proof of main theorems}).

\section{Background}\label{section background}
Let $G$ be a group, and $A$ an abelian group.
A \emph{2-cocycle} is a map $\alpha:G\times G\to A$ satisfying $$\alpha(x,y)\cdot\alpha(xy,z)=\alpha(x,yz)\cdot\alpha(y,z)\ .$$
A \emph{2-coboundary} is a cocycle obtained from a map $\zeta:G\to A$ as
$$\delta\zeta(x,y)=\zeta(x)\cdot\zeta(y)\cdot\inv{\zeta(xy)}\ .$$
The sets of cocycles and coboundaries are denoted with $Z^2(G,A)$ and $B^2(G,A)$ respectively, they constitute abelian groups under pointwise multiplication.
The quotient $\Hn^2(G,A)=Z^2(G,A)/B^2(G,A)$ is the \emph{second cohomology group}.
In the particular case $A=\C^\times$, we obtain the Schur multiplier $\M(G)=\Hn^2(G,\C^\times)$, and we briefly denote $Z^2(G)=Z^2(G,\C^\times)$ and $B^2(G)=B^2(G,\C^\times)$.

These definitions play a fundamental role in the theory of central extensions, and in the theory of projective representations.
We will give an account hereby, recommending the reading of \cite[pp.181-185]{isaacs}.
Accordingly with this reference we adopt the right notation $x^y=\inv{y}xy$ and $[x,y]=\inv{x}\inv{y}xy$.

\subsection{Central extensions and Schur covers}\label{section central extensions}
A \emph{central extension} of a group $G$ is a group $\Gamma$ having a central subgroup $A\leq Z(\Gamma)$ such that $\Gamma/A$ is isomorphic with $G$.
It is usually written as $$\omega:1\to A\to\Gamma\stackrel{\pi}{\to} G\to 1$$ where $A=\ker\pi$, and $\omega$ will be now defined.
Let $\phi:G\to\Gamma$ be a section, that is $\pi(\phi(g))=g$ for every $g\in G$.
By definition, every $\gamma\in\Gamma$ can be uniquely written as $\gamma=a\cdot\phi(g)$ for some $a\in A$ and some $g\in G$.
Then, $\omega:G\times G\to A$ is associated with $\phi$ by the relation $$\phi(g)\cdot\phi(h)=\omega(g,h)\cdot\phi(gh)\ ,$$ and in turn $\omega\in Z^2(G,A)$.

Consider now a different section $\phi':G\to\Gamma$, clearly $\phi'(g)=\zeta(g)\cdot\phi(g)$ for some $\zeta:G\to A$.
From the analogue relation defining $\omega'$, it follows that $\omega'=\omega\cdot\delta\zeta$.
Multiplication by a coboundary correspond to a change of section.
We may also mention that the trivial cocycle $G\times G\to \{1\}\leq A$ corresponds to the trivial extension $\Gamma=G\times A$.

We briefly show how the Schur multiplier parametrizes the central extensions.
Denote by $\check A=\Hom(A,\C^\times)$ the group of the irreducible characters of $A$.
Then there exists $\eta:\check A\to\M(G)$ called the \emph{standard map}, defined as $$\eta:\check A\to\M(G)\ \ ,\ \ \lambda\mapsto\eta(\lambda)=[\lambda\circ\omega]\ \ ,\ \ \lambda\circ\omega(x,y)=\lambda(\omega(x,y))\ .$$
By the discussion above $\eta$ is well-defined, and it is easy to see that $\eta$ is a homomorphism such that $$\ker\eta=(A\cap\Gamma')^\perp\ \ ,\ (A\cap\Gamma')^\perp=\{\chi\in\check A\ |\ A\cap\Gamma'\leq\ker\pi\}\ .$$

The standard map also leads to the definition of Schur covers: a central extension is a \emph{Schur cover} of $G$ if the standard map is an isomorphism.
An equivalent definition is the following: a \emph{Schur cover} of $G$ is a central extension such that the kernel is isomorphic with the Schur multiplier and it is contained in the derived subgroup, $$1\to M\to\Gamma\to G\to 1\ \ ,\ \ M\simeq\M(G)\ \ ,\ \ M\leq Z(\Gamma_G)\cap\Gamma_G'\ .$$
If we make the weaker assumption that the standard map is onto, then $\Gamma$ has the \emph{projective lifting property}.
This is equivalent to the following property, $$1\to A\to\Gamma\to G\to 1\ \ ,\ \ A\cap\Gamma'\simeq\M(G)\ \ ,\ \ A\leq Z(\Gamma_G)\ .$$

If $\Gamma$ has the projective lifting property, then $\exp\M(G)$ has to divide $\exp\Gamma$.
Therefore, it has interest to find a minimal bound for the exponent of an extensions with the projective lifting property. We remark that this lower bound has not to be realized by a Schur cover, as shown by the following example.
\begin{es}
Consider the semidirect product of two cyclic groups of order $p^2$ defined by $$G=\gen{x,y\ |\ x^{p^2}=y^{p^2}=1,\ y^x=y^{p+1}}\ .$$
It can be seen, for instance using {\tt Gap} \cite{gap}, that $\exp\M(G)=p$ and that the group $$\Gamma_1=\gen{\bar x,\bar y\ |\ \bar x^{p^2}=\bar y^{p^3}=1,\ \bar y^{\bar x}=\bar y^{p+1}}$$ is the only Schur cover of $G$, and it has exponent $p^3$.
Nevertheless, the group $$\Gamma_2=\gen{\tilde x,\tilde y,\tilde z\ |\ \tilde x^{p^2}=\tilde y^{p^2}=\tilde z^{p^2}=1\ ,\ \tilde y^{\tilde x}=\tilde y^{p+1}\cdot\tilde z,\ [\tilde z,\tilde x]=[\tilde z,\tilde y]=1}$$ has the projective lifting property for $G$, and satisfies $\exp\Gamma_2=p^2$.
\end{es}
Among the central extensions those with the projective lifting property have fundamental importance, as they permit to transfer results on ordinary representations to projective representations and vice-versa. This was depicted by Schur who also proved by a constructive method that Schur covers always exist (\S\ref{section schur construction}).

Concerning the problem of bounding the exponent of the Schur multiplier, we can focus on the order of the single elements.
Therefore, we introduce a local variation: for any $\mu\in\M(G)$ we will say that a central extension $$1\to A_\mu\to\Gamma\to G\to 1$$ is a \emph{$\mu$-cover} if $A_\mu$ is cyclic and the standard map $\eta_\mu$ maps $\check A_\mu$ onto $\gen{\mu}$.
This definition is not usually stated, as for any $\mu\in\M(G)$ it is possible to obtain a $\mu$-cover as a quotient of any extension with the projective lifting property.
\begin{pps}\label{pps cover mu-extensions}
Let $\Gamma$ be an extension of $G$ with the projective lifting property, and $\mu\in\M(G)$. Then a $\mu$-cover $\Gamma_\mu$ can be obtained as a quotient of $\Gamma_G$.
In particular, $\exp\Gamma_\mu$ divides $\exp\Gamma_G$.
\end{pps}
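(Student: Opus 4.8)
The plan is to exhibit the $\mu$-cover explicitly as a quotient of $\Gamma$ by a suitable central subgroup, read off from the cocycle $\omega$ attached to $\Gamma$. Write the given extension as $1\to A\to\Gamma\to G\to 1$ with $A\le Z(\Gamma)$, fix a section $\phi\colon G\to\Gamma$, and let $\omega\in Z^2(G,A)$ be the associated cocycle. Since $\Gamma$ has the projective lifting property, the standard map $\eta\colon\check A\to\M(G)$ is surjective; so the first step is to choose $\lambda\in\check A$ with $\eta(\lambda)=[\lambda\circ\omega]=\mu$.

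Next, I would set $A_\mu=A/\ker\lambda$ and $\Gamma_\mu=\Gamma/\ker\lambda$. As $\ker\lambda\le A\le Z(\Gamma)$, the subgroup $\ker\lambda$ is normal in $\Gamma$, so $\Gamma_\mu$ is again a central extension $1\to A_\mu\to\Gamma_\mu\to G\to 1$; moreover $\lambda$ factors through an injective character $\bar\lambda\colon A_\mu\hookrightarrow\C^\times$, so $A_\mu$ is a finite cyclic group and $\bar\lambda$ is a generator of $\check A_\mu=\Hom(A_\mu,\C^\times)$. Composing $\phi$ with the projection $\Gamma\to\Gamma_\mu$ yields a section of $\Gamma_\mu$ whose cocycle is $\bar\omega=\omega\bmod\ker\lambda\in Z^2(G,A_\mu)$.

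Then I would compute the standard map $\eta_\mu\colon\check A_\mu\to\M(G)$ of $\Gamma_\mu$ on the generator $\bar\lambda$: by construction $\bar\lambda\circ\bar\omega=\lambda\circ\omega$ as maps $G\times G\to\C^\times$, whence $\eta_\mu(\bar\lambda)=[\lambda\circ\omega]=\mu$. Since $\check A_\mu=\gen{\bar\lambda}$ and $\eta_\mu$ is a homomorphism, $\im\eta_\mu=\gen{\mu}$. Thus $A_\mu$ is cyclic and $\eta_\mu$ maps $\check A_\mu$ onto $\gen{\mu}$, i.e.\ $\Gamma_\mu$ is a $\mu$-cover of $G$ which is, by construction, a quotient of $\Gamma$. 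Finally, the exponent of a quotient divides the exponent of the group, so $\exp\Gamma_\mu$ divides $\exp\Gamma$, which is the remaining assertion.

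The only step needing a word of care is the claim that $A_\mu=A/\ker\lambda$ is cyclic: here one uses that a finite subgroup of $\C^\times$ is cyclic, which applies since the kernels of the extensions under consideration are finite (the covers of \S\ref{section schur construction} are finite groups, and in any case one may first pass to a finite sub-extension carrying $\mu$). The remaining verifications---that passing from $A$ to $A/\ker\lambda$ transports $\omega$ to $\omega\bmod\ker\lambda$ and is compatible with the standard map---are immediate from the definitions recalled above, so I do not expect a genuine obstacle.
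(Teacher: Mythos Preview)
Your argument is correct and is essentially the same as the paper's: both choose $\lambda\in\check A$ with $\eta(\lambda)=\mu$, set $\Gamma_\mu=\Gamma/\ker\lambda$ and $A_\mu=A/\ker\lambda$, observe that $A_\mu$ is cyclic with $\bar\lambda$ a faithful generator of $\check A_\mu$, and check that $\eta_\mu(\bar\lambda)=\mu$. Your write-up is simply more detailed (in particular on why $A_\mu$ is cyclic and on how the standard map is inherited by the quotient), but there is no difference in strategy.
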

\begin{proof}
Since the standard map $\eta_G:\check A\to\M(G)$ is assumed to be onto, there exists a preimage $\lambda\in\check A$ of $\mu$ under $\eta_G$, that is $\eta_G(\lambda)=\mu$.
We claim that the $\mu$-cover is $\Gamma_\mu=\Gamma_G/\ker\lambda$, whose exponent divides $\exp\Gamma_G$.
Set $A_\mu=A/\ker\lambda$, then $\lambda$ can be identified with a faithful irreducible character $\lambda_\mu$ of the cyclic group $A_\mu$, and the standard map $\eta_\mu:\check A_\mu\to \gen{\mu}$ is onto.
\end{proof}
We write down some complementary formulas for further reference.
Let $\Gamma$ be any central extension.
For any section $\phi$, an element $\gamma$ of $\Gamma$ is uniquely written as $\gamma=a\cdot\phi(g)$ for some $a\in A$ and some $g\in G$.
Hence, $o(\gamma)$ divides $\lcm\{o(a),o(\phi(g))\}$ and since $o(\phi(g))=o(g)\cdot o(\phi(g)^{o(g)})$ it holds
\begin{equation}\label{eq exponent of extensions}
 \exp\Gamma=\lcm\{\exp A\ ,\ \max_{g\in G} o(g)\cdot o(\phi(g)^{o(g)})\}\ .
\end{equation}
Moreover, for $g\in G$ it is not difficult to see that
\begin{equation}\label{eq order power central extension}
\phi(g)^{o(g)}=\prod_{j=0}^{o(g)-1}\omega(g,g^j)\ .
\end{equation}
Finally, concerning conjugation in $\Gamma$, by comparison of $\phi(x)\cdot\phi(y)$ and $\phi(y)\cdot\phi(x^y)$ it follows that
\begin{equation}\label{eq conjugation central extension}
\phi(x)^{\phi(y)}=\omega(x,y)\inv{\omega(y,x^y)}\cdot\phi(x^y)
\end{equation}
holds for any $x,y\in G$.
\begin{pps}\label{pps central extension and generators}
Let $\Gamma$ be a central extension of a $d$-generator group $G$.
Then there exists a $d$-generator sugroup $X$ of $\Gamma$, which is a central extension of $G$ such that $X'=\Gamma'$.
In particular, if $\Gamma$ has the projective lifting property, then also $X$ does, and if $\Gamma$ is a Schur cover, then $X=\Gamma$.
\end{pps}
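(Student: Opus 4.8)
The plan is to obtain $X$ simply by lifting a generating set. Write the extension as $1\to A\to\Gamma\stackrel{\pi}{\to} G\to 1$ with $A\leq Z(\Gamma)$, fix generators $g_1,\dots,g_d$ of $G$, choose $\gamma_i\in\Gamma$ with $\pi(\gamma_i)=g_i$, and put $X=\gen{\gamma_1,\dots,\gamma_d}$. By construction $X$ is $d$-generated, and since $\pi(X)$ contains each $g_i$ we have $\pi(X)=G$; hence for every $\gamma\in\Gamma$ there is $x\in X$ with $\pi(\gamma)=\pi(x)$, so $\gamma\inv{x}\in A$ and thus $\Gamma=XA$. As $A$ is central, $X\cap A\leq Z(X)$, and the second isomorphism theorem gives $X/(X\cap A)\cong XA/A=\Gamma/A\cong G$, so $X$ is itself a central extension of $G$.

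The key point is that passing from $\Gamma$ to $X$ does not shrink the derived subgroup: since $A$ is central, a commutator of two elements of $\Gamma=XA$ only involves their $X$-parts, so $[\Gamma,\Gamma]=[X,X]$, that is $X'=\Gamma'$. This single identity carries the rest. If $\Gamma$ has the projective lifting property, i.e.\ $A$ is central and $A\cap\Gamma'\simeq\M(G)$ (the equivalent characterization recalled in \S\ref{section central extensions}), then from $A\cap\Gamma'\subseteq\Gamma'=X'\subseteq X$ we get $A\cap\Gamma'\subseteq X\cap A$, whence the kernel of the extension $1\to X\cap A\to X\to G\to 1$ satisfies
$$(X\cap A)\cap X'=(X\cap A)\cap\Gamma'=A\cap\Gamma'\simeq\M(G)\ ,$$
so $X$ has the projective lifting property as well. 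If moreover $\Gamma$ is a Schur cover, then in addition $A\subseteq\Gamma'=X'\subseteq X$, and combined with $\Gamma=XA$ this forces $X=\Gamma$.

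I expect no real obstacle here: the argument is bookkeeping around the equality $X'=\Gamma'$ and the identification of the kernel of the restricted extension with $X\cap A$. The only mild care needed is to apply the characterization of the projective lifting property to the correct pair — here $X\cap A$ inside $X$ — which is why the inclusion $A\cap\Gamma'\subseteq X$ is isolated explicitly before computing $(X\cap A)\cap X'$.
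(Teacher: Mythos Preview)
Your proof is correct and follows essentially the same approach as the paper: lift a generating set to $\Gamma$, use $\Gamma=XA$ with $A$ central to obtain $X'=\Gamma'$, and deduce the rest. Your version is in fact more complete, since you spell out the ``in particular'' claims about the projective lifting property and Schur covers, which the paper's proof leaves implicit.
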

\begin{proof}
Let $G=\gen{x_1,\ldots,x_d}$ and $\phi:G\to\Gamma$ be any section.
We claim that the desired subgroup is $X=\gen{\phi(x_1),\ldots,\phi(x_d)}$.
For $g\in G$ fix a writing $g=x_{i_1}^{\varepsilon_1}\cdots x_{i_l}^{\varepsilon_l}$, then $\phi(g)=b\cdot\phi(x_{i_1})^{\varepsilon_1}\cdots\phi(x_{i_l})^{\varepsilon_l}=b\cdot\xi$, for $b\in A$ and $\xi\in X$.
Any $\gamma\in\Gamma$ is uniquely written as $a\cdot\phi(g)$ for some $a\in A$ and $g\in G$.
Therefore, since $A\leq Z(\Gamma)$, then $\Gamma'=\gen{[\gamma_1,\gamma_2]\ |\ \gamma_i\in\Gamma}=\gen{[\xi_1,\xi_2]\ |\ \xi_i\in X}=X'$.
\end{proof}

\subsection{Projective representations and twisted group algebras}\label{section proj representations}
In analogy to the group algebra $\C[G]$ for ordinary representations, for projective representations it is defined the \emph{twisted group algebra}, which in turn relies on the cocycles.
For $\alpha\in Z^2(G)$, $\C^\alpha[G]$ is the $\C$-algebra with basis $\bar G=\{\bar g\ |\ g\in G\}$ identified with the group, and product $\bar x\cdot \bar y=\alpha(x,y)\cdot \Ccg{xy}$ obeying to the group product unless a twisting coefficient.

The cocycle condition is the associative law $(\bar x\cdot \bar y)\cdot\bar z=\bar x\cdot(\bar y\cdot \bar z)$, whereas multiplication by a coboundary represents a locally-linear change of group-basis $\tilde g=\zeta(g)\cdot \bar g$.
As common we consider normalized cocycles, that is $\alpha(1,1)=1$.
The meaning of this assumption is that $\bar 1$ is the identity of the twisted group algebra.
Hence, for normalized coboundaries $\delta\zeta$ it can be assumed $\zeta(1)=1$.

For a subgroup $H$ of $G$, the \emph{restriction map} is defined by $$\res:\M(G)\to\M(H)\ ,\ [\alpha]\mapsto[\alpha_H]\ ,\  \alpha_H(h_1,h_2)=\alpha(h_1,h_2)\ ,$$ and there is a natural identification $\C^{\alpha_H}[H]\leq\C^{\alpha}[G]$.
Then, for a normal subgroup $N$ of $G$ the \emph{inflation map} is defined by $$\inf:\M(G/N)\to\M(G)\ ,\ [\beta]\mapsto[\beta^\ast]\ ,\  \beta^\ast(g_1,g_2)=\beta(g_1 N,g_2 N)\ .$$

Clearly, the image of the inflation map from $M(G/N)$ is contained in the kernel of the restriction to $\M(N)$, a description of these subgroups can be done in terms of the idempotents of $\C^{\alpha_N}[N]$.
We recall that the twisted group algebra $\C^\alpha[G]$ is semi-simple: it admits a decomposition in irreducible subspaces, each one defined by an idempotent.

It can be seen that $[\alpha_H]=1$ iff $\C^{\alpha_H}[H]$ admits a 1-dimensional idempotent.
Moreover, for a normal subgroup $N$ of $G$, it was proven by R.~J.~Higgs \cite[Pr.~1.5]{higgs degree 1} that $[\alpha]=[\beta^\ast]$ for some $\beta\in Z^2(G/N)$ iff $\C^{\alpha_N}[N]$ admits a 1-dimensional idempotent which is invariant under conjugation in $\C^{\alpha}[G]$.
In analogy to \eqref{eq conjugation central extension}, for any $x,y\in G$ comparing $\bar x\cdot\bar y$ and $\bar y\cdot\Ccg{x^y}$ we have the relation
\begin{equation}\label{eq conjugation twisted group algebra}
 \bar x^{\bar y}=\alpha(x,y)\cdot\inv{\alpha(y,x^y)}\cdot\Ccg{x^y}
\end{equation}
which describes conjugation in $\C^\alpha[G]$.
\begin{pps}\label{pps conditions on cocycles}
Let $N\trianglelefteq G$ and $\alpha\in Z^2(G)$.
If $\alpha_N=1$ and $\alpha(n,g)=\alpha(g,n^g)$ for every $n\in N$ and $g\in G$, then $[\alpha]$ is inflated from $G/N$.
If in addition $G=N\rtimes H$ and $[\alpha_H]=1$, then it holds $[\alpha]=1$.
\end{pps}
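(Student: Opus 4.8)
The plan is to derive the first statement from the criterion of Higgs recalled in \S\ref{section proj representations}, and then the second from it together with the splitting $G=N\rtimes H$. Since $\alpha_N=1$ the twisted group algebra $\C^{\alpha_N}[N]$ is the ordinary group algebra $\C[N]$, and the natural candidate for a $1$-dimensional idempotent invariant under conjugation is the trivial one,
$$e=\frac{1}{|N|}\sum_{n\in N}\bar n\ .$$
Standard properties of the trivial character of $N$ give $e^2=e$ and $\C[N]\,e=\C e$, so $e$ is a $1$-dimensional idempotent of $\C^{\alpha_N}[N]$; by Higgs's criterion it remains to verify that $e$ is invariant under conjugation in $\C^{\alpha}[G]$.

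To do this I would conjugate $e$ by an arbitrary $\bar g$, $g\in G$, using the relation \eqref{eq conjugation twisted group algebra}:
$$e^{\bar g}=\frac{1}{|N|}\sum_{n\in N}\bar n^{\bar g}=\frac{1}{|N|}\sum_{n\in N}\alpha(n,g)\,\inv{\alpha(g,n^{g})}\,\overline{n^{g}}\ .$$
The hypothesis $\alpha(n,g)=\alpha(g,n^{g})$ kills every coefficient, and because $N\trianglelefteq G$ the assignment $n\mapsto n^{g}$ permutes $N$; hence $e^{\bar g}=e$, that is, $e$ commutes with every $\bar g$. As the $\bar g$ span $\C^{\alpha}[G]$, the element $e$ is central, so in particular invariant under conjugation by every unit of $\C^{\alpha}[G]$. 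Higgs's criterion now yields $[\alpha]=[\beta^{\ast}]$ for some $\beta\in Z^{2}(G/N)$, which is the first claim.

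For the second claim, let $\rho\colon G\to H$ be the retraction of the semidirect product ($\rho(nh)=h$ for $n\in N$, $h\in H$), a homomorphism satisfying $gN=\rho(g)N$ for every $g\in G$. From the definition of the inflation map,
$$\beta^{\ast}(g_1,g_2)=\beta(g_1N,g_2N)=\beta\big(\rho(g_1)N,\rho(g_2)N\big)=\beta^{\ast}_{H}\big(\rho(g_1),\rho(g_2)\big)\ ,$$
so $\beta^{\ast}=\beta^{\ast}_{H}\circ(\rho\times\rho)$. Restricting the identity $[\alpha]=[\beta^{\ast}]$ to $H$ gives $[\beta^{\ast}_{H}]=[\alpha_H]=1$, hence $\beta^{\ast}_{H}=\delta\zeta$ for some $\zeta\colon H\to\C^{\times}$, and therefore $\beta^{\ast}=\delta(\zeta\circ\rho)\in B^{2}(G)$; thus $[\alpha]=[\beta^{\ast}]=1$.

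The mathematical content is light once Higgs's criterion is available: the only points that call for attention are the bookkeeping in the conjugation computation (keeping the normalization of $\alpha$ in mind) and the observation that invariance under conjugation by all units follows from $e$ commuting with the spanning set $\{\bar g:g\in G\}$. I do not anticipate a genuine obstacle.
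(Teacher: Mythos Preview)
Your argument is correct. The first half is exactly the paper's proof: form the principal idempotent $\idempotent{N}=\frac{1}{|N|}\sum_{n\in N}\bar n$ in $\C^{\alpha_N}[N]=\C[N]$, use \eqref{eq conjugation twisted group algebra} together with the hypothesis $\alpha(n,g)=\alpha(g,n^g)$ to see it is central in $\C^{\alpha}[G]$, and invoke Higgs.

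For the second half you take a different route. The paper stays inside the twisted group algebra: since $[\alpha_H]=1$ there is a one-dimensional idempotent $\upsilon_H\in\C^{\alpha_H}[H]$, and as $\idempotent{N}$ is central the product $\idempotent{N}\upsilon_H$ is a one-dimensional idempotent of $\C^{\alpha}[G]$, whence $[\alpha]=1$ again by the idempotent criterion. You instead exploit that the retraction $\rho:G\to H$ identifies $G/N$ with $H$, so that $\inf:\M(G/N)\to\M(G)$ followed by $\res:\M(G)\to\M(H)$ is an isomorphism; once $[\alpha]=[\beta^\ast]$ you read off $[\beta^\ast]=1$ from $[\alpha_H]=1$ via $\beta^\ast=\delta(\zeta\circ\rho)$. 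Your approach is purely cohomological and avoids checking that $\idempotent{N}\upsilon_H$ is one-dimensional; the paper's approach is more uniform with the first half and stays at the level of the algebra. Both are short and essentially equivalent in depth.
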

\begin{proof}
Since $\alpha_N=1$, then $\C^{\alpha_N}[N]$ admits the principal idempotent $$\idempotent{N}=\frac{1}{|N|}\sum_{n\in N}\bar n\ ,$$ which is invariant in $\C^\alpha[G]$ by \eqref{eq conjugation twisted group algebra}.
In case $G=N\rtimes H$, since we assume $[\alpha_H]=1$, then $\C^{\alpha_H}[H]$ admits a central 1-dimensional idempotent $\upsilon_H$.
As in the general case, $\C^{\alpha_N}[N]$ admits the principal idempotent $\idempotent{N}$.
By \eqref{eq conjugation twisted group algebra} $\idempotent{N}$ and $\upsilon_H$ commutes, so that $\idempotent{N}\cdot\upsilon_H$ is a 1-dimensional idempotent of $\C^{\alpha}[G]$, and $[\alpha]=1$.
\end{proof}
Also for the powers there is a formula analogue to \eqref{eq order power central extension}, as for any $g\in G$ it holds
\begin{equation}\label{eq order twisted group algebra}
 \bar g^{o(g)}=\prod_{j=0}^{o(g)-1}\alpha(g,g^j)\ .
\end{equation}
Cocycles whose group-basis satisfy the identity $\bar g^{o(g)}=1$ for any $g\in G$ will play the main role in the next section.

\section{Method}
\subsection{Schur's construction}\label{section schur construction}
We abstract the fundamental tool for our main results.
We give a generalization of the construction which proves Schur's theorem on the existence of a covering group, this will lead to the definition of the unitary covers (\S\ref{section technique}).
\begin{dfn}
Let $H$ be a finite subgroup of $Z^2(G)$.
We define a central extension $$1\to\check H\to\omegatimes{H}{G}\to G\to 1\ \ ,\ \ \check H\leq Z(\omegatimes{H}{G})\ .$$
The underlying set of $\omegatimes{H}{G}$ is $G\times H$, and multiplication is given by the rule $$(g,\chi)\cdot(h,\psi)=(gh,\omega(g,h)\cdot\chi\psi)\ ,$$ where $\omega(g,h)\in\check H$ is defined by $\omega(g,h)(\alpha)=\alpha(g,h)$ for $\alpha\in H$.
\end{dfn}
The proof of Schur's theorem is done in this terms: since $B^2(G)$ is a divisible subgroup of finite index in $Z^2(G)$, then it has a complement $Z^2(G)=B^2(G)\oplus J$, and $\omegatimes{J}{G}$ is a Schur cover of $G$ (cf.~\cite[Th.~11.17]{isaacs}).

This construction is natural respect to the standard map in the following sense. For a cyclic decomposition $H=\gen{\alpha_1}\oplus\ldots\oplus\gen{\alpha_k}$, the dual group admits the decomposition $$\check H=\gen{\check\alpha_1}\oplus\ldots\oplus\gen{\check\alpha_k}\ \ ,\ \ \check\alpha_i(\alpha_j)=\left\{\begin{array}{cl}e^{2\pi\iota/o(\alpha_i)}&\mbox{if }j=i\\ 1&\mbox{otherwise}\end{array}\right.\ \ ,\ \ \iota=\sqrt{-1}\ \ ,$$ and there is a canonical identification of $H$ with the double dual $$H\check\ \check\ \equiv H\ \ ,\ \ \alpha_i\check\ \check\ \equiv\alpha_i\ \ ,$$ under this identification the standard map relative to $\omegatimes{H}{G}$ is the projection from $Z^2(G)$ to $\M(G)$ $$\eta:H\check\ \check\ \equiv H\to\M(G)\ ,\ \eta(\alpha)=[\alpha]\ .$$
Referring to \S\ref{section central extensions}, we immediately have the following lemma.
\begin{lem}\label{lem omega product and proj lifting ppt}
The extension $\omegatimes{H}{G}$ has the projective lifting property iff every cocycle in $Z^2(G)$ is cohomologous with a cocycle of $H$, and it is a Schur cover iff in addition $H\cap B^2(G)=1$.
\end{lem}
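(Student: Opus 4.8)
The plan is to unwind the definitions of \S\ref{section central extensions} for the particular extension $\omegatimes{H}{G}$, using the naturality statement that precedes the lemma: under the canonical identification $H\check{\ }\check{\ }\equiv H$, the standard map $\eta$ for $\omegatimes{H}{G}$ is simply the restriction to $H$ of the projection $Z^2(G)\to\M(G)$, i.e. $\eta(\alpha)=[\alpha]$ for $\alpha\in H$. Everything then reduces to reading off when this map is onto and when it is injective.

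First I would handle the projective lifting property. By definition $\omegatimes{H}{G}$ has it iff $\eta$ is surjective onto $\M(G)$. Since $\eta(\alpha)=[\alpha]$, surjectivity says precisely that for every cohomology class $[\beta]\in\M(G)$ there is some $\alpha\in H$ with $[\alpha]=[\beta]$, that is, every $\beta\in Z^2(G)$ is cohomologous to some cocycle in $H$. This is exactly the stated condition, so that half is immediate once the naturality identification is invoked.

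Next I would handle the Schur cover claim. A Schur cover is a central extension for which $\eta$ is an \emph{isomorphism}, so on top of surjectivity (already characterized) we need $\ker\eta=1$. But $\ker\eta=\{\alpha\in H:[\alpha]=1\}=H\cap B^2(G)$. Hence $\eta$ is injective iff $H\cap B^2(G)=1$, and combining with the previous paragraph gives that $\omegatimes{H}{G}$ is a Schur cover iff every cocycle in $Z^2(G)$ is cohomologous with a cocycle of $H$ \emph{and} $H\cap B^2(G)=1$, as claimed. One may alternatively phrase the kernel computation via $\ker\eta=(\check H\cap(\omegatimes{H}{G})')^\perp$ as in \S\ref{section central extensions}, but the direct description $\ker\eta=H\cap B^2(G)$ is cleaner here and follows at once from $\eta(\alpha)=[\alpha]$.

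There is no real obstacle: the only thing requiring care is the verification that the standard map of $\omegatimes{H}{G}$ is indeed $\alpha\mapsto[\alpha]$ under the double-dual identification, and the cocycle $\omega$ exhibited in the construction (with $\phi(g)=(g,1)$ as section) realizes exactly this. Since that computation is spelled out in the naturality discussion immediately above the lemma, the lemma follows formally, and it is fair to say it is ``immediate'' as the excerpt does.
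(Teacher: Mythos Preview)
Your proposal is correct and matches the paper's approach: the paper gives no separate proof, stating only that the lemma is immediate from the discussion in \S\ref{section central extensions} together with the naturality identification $\eta(\alpha)=[\alpha]$, which is exactly the unwinding you carry out. Your computation of $\ker\eta=H\cap B^2(G)$ and of surjectivity of $\eta$ is precisely what the paper leaves implicit.
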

For a pair of subgroups $K\leq H\leq Z^2(G,A)$, there is a natural isomorphism $$\check K\to\check H/K^\perp\ \ ,\ \ K^\perp=\{\chi\in\check H\ |\ K\leq\ker\chi\}\ ,$$ defined choosing coherent cyclic decompositions
$$\left\{\begin{array}{ll}
      K=\gen{\beta_1}\oplus\ldots\oplus\gen{\beta_l}&\\
      H=\gen{\alpha_1}\oplus\ldots\oplus\gen{\alpha_l}\oplus\ldots\oplus\gen{\alpha_k}&,\ \ \beta_i\in\gen{\alpha_i}\ ,
  \end{array}\right.$$
then setting $\check\beta_i\mapsto\check\alpha_i K^\perp$, this induces an isomorphism $$\omegatimes{K}{G}\simeq(\omegatimes{H}{G})/K^\perp\ \ .$$
A case of particular interest is when $K$ is obtained via the inflation map.
\begin{lem}\label{lem duality and inflation}
Let $N$ be a normal subgroup of $G$, $H$ be a finite subgroup of $Z^2(G)$, and $L$ be a finite subgroup of $Z^2(G/N)$ such that $H\cap Z^2(G/N)^\ast=H\cap L^\ast$. Denote $$K=H\cap L^\ast\ \ ,\ \ \dot N=\gen{(n,1_H)\ |\ n\in N}\ ,$$ then there is a surjection $$\omegatimes{L}{(G/N)}\twoheadrightarrow(\omegatimes{H}{G})/K^\perp\dot N\ ,$$ which is an isomorphism in case $L^\ast\leq H$.
\end{lem}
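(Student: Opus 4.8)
The plan is to realise the claimed map as the composition of two collapsing operations applied to $\omegatimes{H}{G}$: killing the central subgroup $K^\perp$ and killing $\dot N$. The starting observation is that every cocycle in $K=H\cap L^\ast$ is an \emph{inflated} cocycle. Indeed $K\subseteq L^\ast\subseteq Z^2(G/N)^\ast$, so each $\alpha\in K$ equals $\alpha_0^\ast$ for a unique $\alpha_0\in L$ (inflation being injective on cocycles), whence $\alpha(n,g)=\alpha_0(N,gN)=1$ and $\alpha(g,n)=\alpha_0(gN,N)=1$ for all $n\in N$, $g\in G$ by normalisation. Setting $L_0=\{\beta\in L\ :\ \beta^\ast\in H\}$, inflation restricts to an isomorphism $L_0\to K=L_0^\ast$, hence dually to an isomorphism $\check K\simeq\check{L_0}$; and the hypothesis $H\cap Z^2(G/N)^\ast=H\cap L^\ast$ says precisely that $K$ is the \emph{full} subgroup of inflated cocycles inside $H$, so that collapsing $K^\perp$ together with $\dot N$ collapses exactly the inflation from $G/N$ that is visible in $H$.

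Next I would check that $K^\perp\dot N$ is a normal subgroup of $\omegatimes{H}{G}$. The factor $K^\perp\leq\check H$ is central, so it suffices to see that $\dot N$ is normalised modulo $K^\perp$; this is immediate from the conjugation rule \eqref{eq conjugation central extension}, since for $n\in N$, $g\in G$ the correcting factors $\omega(n,g)$ and $\omega(g,n^g)$ evaluate to $1$ on every element of $K$ (the cocycles of $K$ are inflated and $n,n^g\in N$), hence lie in $K^\perp$, and therefore $\phi(n)^{\phi(g)}\in K^\perp\phi(n^g)$. The same vanishing gives $\dot N\cap\check H\subseteq K^\perp$, so a short order count yields $|(\omegatimes{H}{G})/K^\perp\dot N|=|G/N|\cdot|K|=|G/N|\cdot|L_0|$, which is already the order expected of $\omegatimes{L_0}{(G/N)}$.

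Now comes the main step: build a homomorphism $\Phi\colon\omegatimes{L}{(G/N)}\to(\omegatimes{H}{G})/K^\perp\dot N$. Fix a set-theoretic section $s\colon G/N\to G$ with $s(N)=1$, and for $\chi\in\check L$ choose $\tilde\chi\in\check H$ whose image in $\check H/K^\perp\simeq\check K\simeq\check{L_0}$ is $\chi|_{L_0}$ (well defined up to $K^\perp$); put $\Phi(\bar g,\chi)=\overline{(s(\bar g),\tilde\chi)}$. To see this is a homomorphism, compare $(s(\bar g),\tilde\chi)\cdot(s(\bar h),\tilde\psi)$ with $(s(\bar g\bar h),\tilde\theta)$, where $\theta=\omega_L(\bar g,\bar h)\chi\psi$ and $\omega_L$ is the structure map of $\omegatimes{L}{(G/N)}$: the discrepancy $s(\bar g)s(\bar h)=s(\bar g\bar h)n_0$ with $n_0\in N$ is absorbed into $\dot N$ up to a factor of $K^\perp$ (inflated cocycles are trivial whenever one argument lies in $N$), while on the level of $\check H/K^\perp\simeq\check{L_0}$ the two cocycle contributions agree because $\omega(s(\bar g),s(\bar h))(\alpha_0^\ast)=\alpha_0^\ast(s(\bar g),s(\bar h))=\alpha_0(\bar g,\bar h)=\omega_L(\bar g,\bar h)(\alpha_0)$ for $\alpha_0\in L_0$, and $\tilde\chi\tilde\psi$ restricts to $\chi|_{L_0}\psi|_{L_0}$ by choice. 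Surjectivity is clear on the generators $\overline{(g,1_{\check H})}$ and $\overline{(1,\chi)}$ (here one uses that restriction $\check L\twoheadrightarrow\check{L_0}$ is onto). Finally $\Phi(\bar g,\chi)=1$ forces $s(\bar g)\in N$, i.e.\ $\bar g=\bar e$, and then $\tilde\chi\in(K^\perp\dot N)\cap\check H=K^\perp$, i.e.\ $\chi\in L_0^\perp$; so $\ker\Phi=\{(\bar e,\chi)\ :\ \chi\in L_0^\perp\}$. Thus $\Phi$ is the asserted surjection, and it is an isomorphism precisely when $L_0^\perp=1$, that is when $L_0=L$, i.e.\ when $L^\ast\leq H$. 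Equivalently, one may first apply the isomorphism $\omegatimes{K}{G}\simeq(\omegatimes{H}{G})/K^\perp$ from the discussion preceding the lemma, trace $\dot N$ onto $\dot N_K=\{(n,1_{\check K}):n\in N\}\trianglelefteq\omegatimes{K}{G}$, identify $(\omegatimes{K}{G})/\dot N_K$ with $\omegatimes{L_0}{(G/N)}$ through $\check K\simeq\check{L_0}$, and compose with the surjection $\omegatimes{L}{(G/N)}\twoheadrightarrow(\omegatimes{L}{(G/N)})/L_0^\perp\simeq\omegatimes{L_0}{(G/N)}$ induced by $L_0\leq L$.

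The step I expect to be the real obstacle is the homomorphism verification, because it requires juggling three identifications at once — $\check H/K^\perp$ with $\check K$, $\check K$ with $\check{L_0}$ via the dual of inflation, and $\check L$ onto $\check{L_0}$ by restriction — and confirming that under all of them the cocycle $\omega$ attached to $H$ matches the cocycle $\omega_L$ attached to $L$, while the transversal ambiguity introduced by $s$ is exactly soaked up by $\dot N$ modulo $K^\perp$. Once one has isolated the single fact that the cocycles in $K$ are literally inflated, and hence vanish as soon as one argument lies in $N$, all of these checks become routine.
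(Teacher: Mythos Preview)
Your argument is correct and follows essentially the same route as the paper: show $\dot N\cap\check H\leq K^\perp$ and $K^\perp\dot N\trianglelefteq\omegatimes{H}{G}$ using that inflated cocycles vanish when one argument lies in $N$, then write down the surjection explicitly and identify its kernel. The only difference is packaging: the paper fixes coherent cyclic decompositions of $L$, $K$, $H$ and defines the map on generators $(gN,\check\gamma_i)\mapsto(g,\check\alpha_i)K^\perp\dot N$, whereas you phrase the same map via the abstract identifications $\check H/K^\perp\simeq\check K\simeq\check{L_0}$ and the restriction $\check L\twoheadrightarrow\check{L_0}$; your alternative factorisation through $\omegatimes{K}{G}\simeq(\omegatimes{H}{G})/K^\perp$ is a clean way to see the same decomposition.
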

\begin{proof}
If $(1,\chi)\in\dot N\cap\check H$, then $\chi=\prod_{j=1}^k\omega(n_{1,j},n_{2,j})$ for some $n_{i,j}\in N$, so that $\chi\in K^\perp$. Consequently, $$\dot N\cap\check H\leq K^\perp\ .$$
Also, $(n,1_H)^{(g,\chi)}=(n^g,\omega(n,g)\inv{\omega(g,n^g)})$ and $\omega(n,g)\inv{\omega(g,n^g)}\in K^\perp$.
Therefore, $$K^\perp\dot N\trianglelefteq \omegatimes{H}{G}\ .$$
Therefore, we obtain the central extension $$1\to\check H/K^\perp\to(\omegatimes{H}{G})/K^\perp\dot N\to G/N\to 1$$ and we show that $(\omegatimes{H}{G})/K^\perp\dot N$ is a homomorphic image of $\omegatimes{L}{(G/N)}$.
Write
$$\left\{\begin{array}{ll}
    L=\gen{\gamma_1}\oplus\ldots\oplus\gen{\gamma_l}\oplus\ldots\oplus\gen{\gamma_m}&\\
    K=\gen{\beta_1}\oplus\ldots\oplus\gen{\beta_l}&,\ \ \beta_i\in\gen{\gamma_i^\ast}\\
    H=\gen{\alpha_1}\oplus\ldots\oplus\gen{\alpha_l}\oplus\ldots\oplus\gen{\alpha_k}&,\ \ \beta_i\in\gen{\alpha_i}\ .
\end{array}\right.$$
If $\beta_i=(\gamma_i^\ast)^{m_i}$, then there is an isomorphism $$\check L/\tilde K^\perp\simeq\check K\ \ ,\ \ \check\gamma_i\tilde K^\perp\mapsto\check\beta_i\ \ ,\ \ \tilde K=\gen{\gamma_1^{m_1}}\oplus\ldots\oplus\gen{\gamma_l^{m_l}}\ ,$$ which can be composed with the canonical isomorphism $\check K\simeq\check H/K^\perp$ to give $$\check L\twoheadrightarrow\check L/\tilde K^\perp\simeq\check H/K^\perp\ \ ,\ \ \check\gamma_i\mapsto\check\alpha_i K^\perp\ .$$
Then, observe that $(gn,1_H)=(g,1_H)\cdot(n,\inv{\omega(g,n)})$ and that $(n,\inv{\omega(g,n)})\in K^\perp\dot N$.
The map $$(gN,\check\gamma_1^{k_1}\cdots\check\gamma_m^{k_m})\mapsto(g,\check\alpha_1^{k_1}\cdots\check\alpha_l^{k_l})\ K^\perp\dot N$$ is well defined, and it is the desired homomorphism.
In case $L^\ast\leq H$, then $K=L^\ast$ and $\check L\simeq\check K$. Thus, the map described is one to one.
\end{proof}

\subsection{Unitary cocycles and unitary covers}\label{section technique}
We introduce a subgroup of $Z^2(G)$, whose definition is done accordingly to \eqref{eq order twisted group algebra}, and we mimic the Schur's construction (\S\ref{section schur construction}) introducing the unitary cover.
\begin{dfn}\label{dfn unitary cocycles}
A cocycle $\alpha\in Z^2(G)$ is said to be \emph{unitary} if $$\prod_{j=0}^{o(g)-1}\alpha(g,g^j)=1$$ for every $g\in G$.
The set of unitary cocycles constitutes a group denoted by $\ZU(G)$, the \emph{unitary cover} of $G$ is the extension $$\GU(G)=\ZU(G)\check\ \propto G\ \ .$$
\end{dfn}
Unitary cocycles and unitary covers are the core of our main results. We begin proving that every cocycle is cohomologous with an unitary, so that cohomology can be done with unitary cocycles exclusively. We describe the unitary coboundaries and provide a relation wich refers to conjugation as shown by \eqref{eq conjugation twisted group algebra}.

At once we will show a clear benefit of these facts, as we ready give the first statement of Theorem \ref{thm main} in its explicit formulation.
\begin{lem}\label{lem unitary cocycles}
Let $\alpha\in Z^2(G)$. Then:
\begin{enumerate}[i)]
 \item There exists $\beta\in\ZU(G)$ such that $[\alpha]=[\beta]$. Therefore, $$M(G)\simeq\ZU(G)/\BU(G)$$ where $$\BU(G)=B^2(G)\cap\ZU(G)$$ is the group of unitary coboundaries.
 \item If $\delta\zeta\in\BU(G)$ for $\zeta:G\to\C^\times$, then $\zeta(g)^{o(g)}=1$ for any $g\in G$.
 \item $\ZU(G)$ and $\BU(G)$ are finite, and $\exp\BU(G)$ divides $\exp G$.
 \item If $\beta\in\ZU(G)$, then $\beta(x,g)^{o(x)}=\beta(g,x^g)^{o(x)}$ for every $x,g\in G$.
\end{enumerate}
\end{lem}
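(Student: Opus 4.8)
The plan is to dispatch the four items in sequence, since (i) supplies the surjection $\ZU(G)\twoheadrightarrow\M(G)$ exploited afterwards, (ii) is the computation that feeds (iii), while (iv) is independent and follows from the conjugation formula. For (i), given $\alpha\in Z^2(G)$ I would correct it by a coboundary. By \eqref{eq order twisted group algebra} the obstruction to unitarity of $\alpha$ at $g$ is the scalar $c(g)=\prod_{j=0}^{o(g)-1}\alpha(g,g^j)=\bar g^{\,o(g)}\in\C^\times$. Since $\C^\times$ is divisible, for each $g$ choose $\zeta(g)$ with $\zeta(g)^{o(g)}=\inv{c(g)}$ and $\zeta(1)=1$ (compatible, as $c(1)=\alpha(1,1)=1$); this is a well-defined map $G\to\C^\times$ because the values are picked independently. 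For the new basis $\tilde g=\zeta(g)\bar g$ of $\C^{\beta}[G]$ with $\beta=\alpha\cdot\delta\zeta$, centrality of scalars gives $\tilde g^{\,o(g)}=\zeta(g)^{o(g)}c(g)=1$, so $\beta\in\ZU(G)$ and $[\beta]=[\alpha]$. Hence $\ZU(G)\hookrightarrow Z^2(G)\to\M(G)$ is onto with kernel $B^2(G)\cap\ZU(G)=\BU(G)$, so $\M(G)\simeq\ZU(G)/\BU(G)$.

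For (ii), expand $\prod_{j=0}^{o(g)-1}\delta\zeta(g,g^j)$ via $\delta\zeta(g,g^j)=\zeta(g)\zeta(g^j)\inv{\zeta(g^{j+1})}$: the products $\prod_{j=0}^{o(g)-1}\zeta(g^j)$ and $\prod_{j=0}^{o(g)-1}\zeta(g^{j+1})$ coincide, differing only in the factors $\zeta(g^0)$ and $\zeta(g^{o(g)})$ which both equal $\zeta(1)$, so the telescoping collapses the product to $\zeta(g)^{o(g)}$; when $\delta\zeta\in\BU(G)$ this is $1$, whence $\zeta(g)^{o(g)}=1$ for all $g$. For (iii), by (ii) every element of $\BU(G)$ is represented by some $\zeta$ taking at each $g$ a value that is an $o(g)$-th root of unity, and there are only finitely many such $\zeta$ (finitely many points, bounded-order roots of unity at each), so $\BU(G)$ is finite; moreover $(\delta\zeta)^{\exp G}=\delta(\zeta^{\exp G})$ and $\zeta(g)^{\exp G}=1$ since $o(g)\mid\exp G$, hence $\exp\BU(G)\mid\exp G$. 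Finally the extension $1\to\BU(G)\to\ZU(G)\to\M(G)\to1$ from (i) has finite ends (recall $|\M(G)|=[Z^2(G):B^2(G)]$), so $\ZU(G)$ is finite.

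For (iv), apply the conjugation formula \eqref{eq conjugation twisted group algebra} in $\C^{\beta}[G]$, $\bar x^{\bar g}=\beta(x,g)\inv{\beta(g,x^g)}\cdot\Ccg{x^g}$, and raise both sides to the power $o(x)$. Conjugation by the unit $\bar g$ is an algebra automorphism, so the left side is $\bigl(\bar x^{\,o(x)}\bigr)^{\bar g}=1$ by unitarity of $\beta$; on the right, $\bigl(\Ccg{x^g}\bigr)^{o(x)}=1$ as well since $o(x^g)=o(x)$, and the scalar $\beta(x,g)\inv{\beta(g,x^g)}$ factors out, leaving $\bigl(\beta(x,g)\inv{\beta(g,x^g)}\bigr)^{o(x)}=1$, i.e. $\beta(x,g)^{o(x)}=\beta(g,x^g)^{o(x)}$.

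The delicate points I anticipate are: in (i), that the pointwise choice of $\zeta$ needs no coherence beyond the harmless normalization $\zeta(1)=1$; and in (iii), that finiteness of $\BU(G)$ must be drawn from (ii) and not from a bound on its exponent alone --- since $\BU(G)$ sits inside the divisible group $B^2(G)$, it is the root-of-unity constraint on the representing cochains, rather than their order, that pins them to a finite set.
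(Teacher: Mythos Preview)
Your proof is correct and follows exactly the paper's approach: in (i) you correct $\alpha$ by the coboundary of an $o(g)$-th root of $\prod_{j}\alpha(g,g^j)$, in (ii) you unwind the telescoping product, (iii) is deduced from (ii) together with the short exact sequence of (i), and (iv) uses the conjugation identity \eqref{eq conjugation twisted group algebra} just as the paper does. The only difference is that you spell out the details the paper leaves implicit (notably the finiteness argument in (iii) and the telescoping in (ii)); the underlying ideas are identical.
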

\begin{proof}
i) let $\alpha$ be any cocycle, define $\xi(g)$ to be any $o(g)$-root of $\prod_{j=0}^{o(g)-1}\alpha(g,g^j)$, set $\beta=\alpha\cdot\inv{\delta\xi}$, then $\beta$ is the unitary cocycle cohomologous with $\alpha$.
ii) apply the definition of unitary cocycle to $\delta\zeta$. iii) follows from ii.
iv) in $\C^\beta[G]$ it holds $(\bar x^{\bar g})^{o(x)}=(\beta(x,g)\inv{\beta(g,x^g)}\cdot\Ccg{x^g})^{o(x)}=[\beta(x,g)\inv{\beta(g,x^g)}]^{o(x)}\cdot(\Ccg{x^g})^{o(x)}$ by \eqref{eq conjugation twisted group algebra}.
Since $\beta\in\ZU(G)$, then $(\bar x^{\bar g})^{o(x)}=(\bar x^{o(x)})^{\bar g}=1$, and since $o(x^g)=o(x)$, then $(\Ccg{x^g})^{o(x)}=1$.
Therefore, $[\beta(n,g)\inv{\beta(g,n^g)}]^{o(x)}=1$ proving the assertion.
\end{proof}
\begin{lem}\label{lem unitary covers}
The exension $\GU(G)$ has the projective lifting property for $G$, and it satisfies $$\exp\GU(G)=\lcm\{\exp\ZU(G),\exp G\}\ .$$
Moreover, if $\Gamma$ is a central extension of $G$ having the projective lifting property, then $\exp\GU(G)$ divides $\exp\Gamma$.
\end{lem}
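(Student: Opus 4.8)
The plan is to prove the three assertions in turn, the first two being quick and the third carrying the real content. For the projective lifting property I would just observe that Lemma~\ref{lem unitary cocycles}(i) says every cocycle of $G$ is cohomologous with a unitary one, which is exactly the hypothesis of Lemma~\ref{lem omega product and proj lifting ppt} for $H=\ZU(G)$; hence $\GU(G)$ has the projective lifting property. For the exponent formula I would apply \eqref{eq exponent of extensions} to $\Gamma=\GU(G)$ with the canonical section $\phi(g)=(g,1)$: the kernel is the (finite abelian) dual of $\ZU(G)$, so it has the same exponent as $\ZU(G)$, while by \eqref{eq order power central extension} the element $\phi(g)^{o(g)}=\prod_{j=0}^{o(g)-1}\omega(g,g^j)$ evaluated on any $\alpha\in\ZU(G)$ gives $\prod_{j=0}^{o(g)-1}\alpha(g,g^j)=1$ by the very definition of a unitary cocycle. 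Thus $\phi(g)^{o(g)}$ is trivial for every $g$, i.e.\ $o(\phi(g)^{o(g)})=1$, and \eqref{eq exponent of extensions} collapses to $\exp\GU(G)=\lcm\{\exp\ZU(G),\exp G\}$.

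For minimality, let $\Gamma$ be a central extension of $G$ with the projective lifting property, $A=\ker\pi$ and $N=\exp\Gamma$; since $G$ is a quotient of $\Gamma$ we have $\exp G\mid N$, so by the formula just obtained it suffices to show $\exp\ZU(G)\mid N$, i.e.\ $\beta(x,y)^N=1$ for every $\beta\in\ZU(G)$ and all $x,y$. Fix a section $\phi$ of $\Gamma$ with cocycle $\omega\in Z^2(G,A)$, and given $\beta\in\ZU(G)$ with class $\mu=[\beta]$, use the projective lifting property to pick $\lambda\in\check A$ with $\eta(\lambda)=\mu$; then $\tau:=\lambda\circ\omega$ is cohomologous with $\beta$, its values $\lambda(\omega(x,y))$ have order dividing $\exp A\mid N$, and $\prod_{j}\tau(g,g^{j})=\lambda(\phi(g)^{o(g)})$ by \eqref{eq order power central extension}. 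I would now re-run the unitarization of Lemma~\ref{lem unitary cocycles}(i) on $\tau$, but choosing each correcting scalar $\xi(g)$ to be a \emph{root of unity} with $\xi(g)^{o(g)}=\lambda(\phi(g)^{o(g)})$; the crucial observation is that such a root has order dividing $o(g)\cdot o(\phi(g)^{o(g)})=o(\phi(g))$, which divides $N$. Hence $\beta':=\tau\cdot\inv{\delta\xi}$ is a unitary cocycle in the class $\mu$ all of whose values $\tau(x,y)\inv{\xi(x)}\inv{\xi(y)}\xi(xy)$ have order dividing $N$, so $(\beta')^N=1$; and $\beta(\beta')^{-1}\in\BU(G)$ has order dividing $\exp G\mid N$ by Lemma~\ref{lem unitary cocycles}(iii), whence $\beta^{N}=(\beta')^{N}\cdot\big(\beta(\beta')^{-1}\big)^{N}=1$. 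Therefore $\exp\ZU(G)\mid N$, and the exponent formula gives $\exp\GU(G)\mid\exp\Gamma$.

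The hard part is the minimality statement, and within it a single subtlety: it does not suffice to invoke the mere \emph{existence} of a unitary representative (Lemma~\ref{lem unitary cocycles}(i)) — one must also control the orders of the correcting scalars $\xi(g)$, and this control is exactly $o(\xi(g))\mid o(\phi(g))\mid\exp\Gamma$, i.e.\ the factorization $o(\phi(g))=o(g)\cdot o(\phi(g)^{o(g)})$ recorded before \eqref{eq exponent of extensions}. Without this refinement the crude argument only places $\beta^{N}$ in $\BU(G)$, forcing one more power and yielding merely $\exp\GU(G)\mid(\exp\Gamma)^{2}$. A representation-theoretic rephrasing is also available — lift to $\Gamma$ the projective representation afforded by a simple block of $\C^{\beta}[G]$, whose matrices satisfy $\rho(g)^{o(g)}=1$, and note that the scalar discrepancy of the lift has order dividing $\exp\Gamma$ — but the cocycle computation above is self-contained in terms of the formulas already at hand.
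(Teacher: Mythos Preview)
Your proof is correct and follows essentially the same strategy as the paper: the first two parts are identical, and for minimality you unitarize $\lambda\circ\omega$ with correcting scalars $\xi(g)$ whose order divides $o(g)\cdot o(\phi(g)^{o(g)})=o(\phi(g))$, then absorb the remaining unitary coboundary via Lemma~\ref{lem unitary cocycles}(iii). The only organizational difference is that the paper first passes to a $\mu$-cover $\Gamma_\mu=\Gamma/\ker\lambda$ (Proposition~\ref{pps cover mu-extensions}) so that $\lambda$ becomes faithful and $o(\lambda(\phi(g)^{o(g)}))=o(\phi(g)^{o(g)})$ exactly; you instead work directly in $\Gamma$ and only use the divisibility $o(\lambda(\phi(g)^{o(g)}))\mid o(\phi(g)^{o(g)})$, which suffices and slightly streamlines the argument by avoiding the two-step local/global split.
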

\begin{proof}
That $\GU(G)$ has the projective lifting property it follows by Lemma \ref{lem unitary cocycles} and Lemma \ref{lem omega product and proj lifting ppt}.
To find the exponent we use \eqref{eq exponent of extensions} with the section $\phi(g)=(g,1_A)$, where $A=\ZU(G)$, by definition $$\phi(g)^{o(g)}=(g^{o(g)},\prod_{j=1}^{o(g)-1}\omega(g,g^j))=1$$ and the assertion is proven.
We now prove minimality dividing the proof in two steps: first we prove a local-version, then we use this to complete the proof.

\emph{Local-version.} Let $\mu\in\M(G)$, and $\Gamma_\mu$ be a $\mu$-cover.
If $\beta\in\ZU(G)$ is such that $\mu=[\beta]$, then $o(\beta)$ divides $\exp\Gamma_\mu$.

\emph{Step I}. It is enough to prove that there exists one cocycle $\beta$ with the required assertion, since two such cocycles differ by an unitary coboundary, and by Lemma \ref{lem unitary cocycles} unitary coboundaries have order dividing $\exp G$ thus dividing $\exp\Gamma_\mu$.
Since $\Gamma_\mu$ is a $\mu$-cover, the standard map $\eta_\mu:\check A_\mu\to\gen{\mu}$ is onto.
Therefore, there exists $\lambda\in\check A_\mu$ such that $\eta_\mu(\lambda)=\mu$, where $\eta_\mu(\lambda)=[\lambda\circ\omega_\mu]$.
Reading the proof of Lemma \ref{lem unitary cocycles} together with \eqref{eq order power central extension}, an unitary cocycle $\beta$ cohomologous to $\lambda\circ\omega_\mu$ is found defining $\zeta:G\to\C^\times$ to be for $g\in G$ any $o(g)$-root of $\lambda(\phi(g)^{o(g)})$, then setting $\beta=(\lambda\circ\omega_\mu)\cdot\inv{\delta\zeta}$.
We show the assertion by use of \eqref{eq exponent of extensions}.
Since $o(\omega_\mu)$ divides $\exp A_\mu$ and $\lambda$ is a homomorphism, then the order of $\lambda\circ\omega_\mu$ divides $\exp A_\mu$, and since $\lambda$ is faithful, then $o(\lambda(\phi(g)^{o(g)}))=o(\phi(g)^{o(g)})$.
Therefore, $$o(\zeta(g))=o(g)\cdot o(\lambda(\phi(g)^{o(g)}))=o(g)\cdot o(\phi(g)^{o(g)})\ ,$$ and $o(\delta\zeta)$ divides $\max_{g\in G} o(g)\cdot o(\phi(g)^{o(g)})$.
Thus, $o(\beta)$ divides $\lcm\{o(\lambda\circ\omega_\mu),o(\delta\zeta)\}$ which divides $\lcm\{\exp A_\mu,\max_{g\in G}o(g)\cdot o(\phi(g)^{o(g)})\}$ that by \eqref{eq exponent of extensions} is $\exp\Gamma_\mu$.

\emph{Step II.}
For $\mu\in\M(G)$, by Lemma \ref{lem unitary cocycles} there exists $\beta_\mu\in\ZU(G)$ such that $[\beta_\mu]=\mu$.
By Proposition \ref{pps cover mu-extensions} there exists a $\mu$-cover $\Gamma_\mu$ obtained as a quotient from $\Gamma_G$, then by the local-version $o(\beta_\mu)$ divides $\exp\Gamma_\mu$ and consequently $\exp\Gamma_G$.
In particular, defining $J=\gen{\beta_\mu\ |\ \mu\in\M(G)}$, then $J$ is a subgroup of $\ZU(G)$ of exponent dividing $\exp\Gamma_G$.
Clearly $\ZU(G)=J\BU(G)$, so that $\exp\ZU(G)=\lcm\{\exp J,\ \exp\BU(G)\}$ which divides $\lcm\{\exp\Gamma_G,\ \exp\BU(G)\}$.
The proof is complete  by Lemma \ref{lem unitary cocycles} since $\exp\BU(G)$ divides $\exp\Gamma_G$.
\end{proof}

\subsection{Proof of the main theorems}\label{section proof of main theorems}
\begin{proof}[\bf Proof of Theorem \ref{thm main}]
The first statement is part of Lemma \ref{lem unitary covers}.
The proof of i) and iv) is explicitly written while proving ii) and v), nevertheless, we shall give an independent proof only based on commonly known results.

{\bf i)} We prove that $$ \exp\M(G)\ \ |\ \ \lcm\{\exp\ZU(N),\exp N\}\cdot\exp\M(G/N)\ ,$$ then we apply Lemma \ref{lem unitary covers}.
For any co-class $[\alpha]\in\M(G)$, we can assume by Lemma \ref{lem unitary cocycles} that $\alpha\in\ZU(G)$.
For $r=\lcm\{\exp\ZU(N),\exp N\}$, we show that $\alpha^r$ satisfies the first two conditions of Proposition \ref{pps conditions on cocycles}, proving that $[\alpha]^r$ is inflated from $\M(G/N)$ so that it becomes trivial when risen to the $\exp\M(G/N)$-power.
Since $\exp\ZU(N)$ divides $r$ clearly $(\alpha_N)^r=1$, and since $\exp N$ divides $r$ by Lemma \ref{lem unitary cocycles} the proof is complete.

{\bf iv)} We assume $G=N\rtimes H$, and we prove that $$\exp\M(G)\ \ |\ \ \lcm\{\exp\ZU(N),\ \exp N,\ \exp\M(H)\}\ ,$$ then we apply Lemma \ref{lem unitary covers}.
By a result of K.~Tahara \cite[Th.~2]{tahara}, $\M(G)$ is isomorphic with the direct sum of $\M(H)$ and the kernel of the restriction from $\M(G)$ to $M(H)$.
Denote by $r$ be the lcm, since $\exp\M(H)$ divides $r$ we can consider only co-classes $[\alpha]\in\M(G)$ whose restriction to $H$ is trivial.
By Lemma \ref{lem unitary cocycles} we can also assume that $\alpha\in\ZU(G)$.
The third condition of Proposition \ref{pps conditions on cocycles} is immediate, and since $\lcm\{\exp\ZU(N),\exp N\}$ divides $r$ the first two conditions follow the general case.

{\bf ii)}  We prove that $$\exp\ZU(G)\ \ |\ \ \lcm\{\exp\ZU(N),\exp N\}\cdot\lcm\{\exp\ZU(G/N),\exp G/N\}\ ,$$ then we apply Lemma \ref{lem unitary covers}.
Fix $\alpha\in\ZU(G)$ and a transversal $\mathcal{T}$ for $N$ in $G$. Define $\xi(g)=\alpha(t,n)$ for $g=tn$ where $t\in\mathcal T$ and $n\in N$.
Let $r=\lcm\{\exp\ZU(N),\exp N\}$, define $\beta=\alpha^r$ and $\tilde\beta=\beta\cdot\delta\vartheta$ for $\vartheta=\xi^r$.
We show that $\vartheta(g)^{o(gN)}=1$ for every $g\in G$, and that $\tilde\beta$ is inflated from $\ZU(G/N)$, as $o(\beta)$ divides $\lcm\{o(\tilde\beta),o(\delta\vartheta)\}$ this completes the proof.
For $g_1,g_2\in G$, let $g_i=t_in_i$ where $t_i\in\mathcal{T}$ and $n_i\in N$, and let $t_1t_2=t_{1,2}n_{1,2}$ where $t_{1,2}\in\mathcal{T}$ and $n_{1,2}\in N$.
In the twisted group algebra $\C^\beta[G]$ consider $$\vartheta(g)^{o(gN)}\cdot \bar g^{o(gN)}=(\vartheta(g)\cdot \bar g)^{o(gN)}=(\bar t\cdot\bar n)^{o(gN)}=\bar t^{o(gN)}\cdot\bar n^{\bar t^{o(gN)-1}}\cdots\bar n^{\bar t}\cdot\bar n\ .$$
For any $x\in G$ since $\bar x^{o(x)}=(\Ccg{x^{o(xN)}})^{o(x^{o(xN)})}=1$, then $$\prod_{j=1}^{o(xN)-1}\alpha(x,x^j)^{o(x^{o(xN)})}=1\ \ ,\ \ x\in G\ \ ,$$ in particular $\bar g^{o(gN)}=\Ccg{g^{o(gN)}}$ and $\bar t^{o(gN)}=\Ccg{t^{o(gN)}}$ as $tN=gN$.
Since $\exp N$ divides $r$,  by Lemma \ref{lem unitary cocycles} then $\bar n^{\bar t^j}=\Ccg{n^{t^j}}$ for every $j$, and since $\exp\ZU(N)$ divides $r$, then $\beta_N=1$.
Therefore, $$\bar t^{o(gN)}\cdot\bar n^{\bar t^{o(gN)-1}}\cdots\bar n^{\bar t}\cdot\bar n=\Ccg{t^{o(gN)}\cdot n^{t^{o(gN)-1}}\cdots n^t\cdot n}=\Ccg{g^{o(gN)}}$$ proving that $\vartheta(g)^{o(gN)}=1$.
We now prove that $\tilde\beta=\gamma^\ast$ for some $\gamma\in\ZU(G/N)$.
Notice that $\xi(t)=\xi(n)=1$ for any $t\in\mathcal{T}$ and any $n\in N$, so that $$\begin{array}{rl}\alpha(g_1,g_2)\cdot\delta\xi(g_1,g_2)&=\alpha(t_1,t_2)\cdot\delta\xi(t_1,t_2)\ \cdot\\ &\ \cdot\ [\alpha(n_1,t_2)\cdot\inv{\alpha(t_2,n_1^{t_2})}]\cdot\alpha(n_{1,2},n_1^{t_2}n_2)\cdot\alpha(n_1^{t_2},n_2)\end{array}\ .$$
Hence, define $\gamma\in Z^2(G/N)$ by setting $\gamma(g_1N,g_2N)=\tilde\beta(g_1,g_2)$, since $$\prod_{j=1}^{o(gN)-1}\tilde\beta(g,g^j)=\prod_{j=1}^{o(gN)-1}\alpha(g,g^j)^r\cdot\delta\vartheta(g,g^j)=\prod_{j=1}^{o(gN)-1}\delta\vartheta(g,g^j)=\vartheta(g)^{o(gN)}=1\ ,$$ then it holds $\gamma\in\ZU(G/N)$.

{\bf v)}  Assume $G=N\rtimes H$, and choose the transversal $\mathcal{T}=H$ so that $$\alpha(g_1,g_2)\cdot\delta\xi(g_1,g_2)=\alpha(h_1,h_2)\cdot[\alpha(n_1,h_2)\cdot\inv{\alpha(h_2,n_1^{h_2})}]\cdot\alpha(n_1^{h_2},n_2)\ .$$
The proof that $o(\xi(g))$ divides $\exp\GU(N)\cdot o(gN)$ follows the general case, and this gives the bound $$\exp\ZU(G)\ \ |\ \ \lcm\{\exp\ZU(H),\exp\GU(N)\cdot\exp H\}$$ where $\GU$ can replace $\ZU$  by Lemma \ref{lem unitary covers} with no loss.

{\bf iii)} Since $\ZU(G/N)^\ast\leq\ZU(G)$, then $$\GU(G/N)\simeq\GU(G)/(\ZU(G/N)^\ast)^\perp\dot N$$ by Lemma \ref{lem duality and inflation}.
\end{proof}
\begin{proof}[\bf Proof of Lemma \ref{lem abelian 2-groups and powerful p-groups}]
We begin finding a cover of minimal exponent for an abelian $p$-group $A$. Write the cyclic decomposition \eqref{eq cyclic decomposition}, then $$\Gamma=\gen{x_1,\ldots,x_m\ |\ o(x_i)=p^{d_i}\ ,\ [[x_i,x_j],x_k]=1}$$ is a cover for $A$ (cf.~\cite[p.~325]{karpilovsky}), which satisfies $\exp\Gamma=\exp A$ for $p>2$, and $\exp\Gamma=2^\sigma\cdot\exp A$ for $p=2$.
It can be seen that $\exp\ZU(A)=\exp\Gamma$, and by Lemma \ref{lem unitary covers} it follows that $\exp\GU(A)=\exp\Gamma$.
The case $p=2$ is proved, while for $p>2$ we use inductively this result on abelian $p$-groups.
By definition, $G'\leq\mho(G)$ so that $G/\mho(G)$ is elementary abelian, and we can assume that $\exp G>p$.
Since $\exp\GU(G)=\exp\GU(\mho(G))\cdot\exp\GU(G/\mho(G))$, by Theorem \ref{thm main}, the result follows by induction as $\mho(G)$ is powerful and $\exp\mho(G)=\exp G/p$ \cite[Cor.~1.5, Pr.~1.7]{lubotzkymann}.
\end{proof}
\begin{proof}[\bf Proof of Proposition \ref{pps regular p-groups}]
We can assume that $\exp G\geq p>2$. It is known that $\mho(G)$ is powerful \cite[p.~497]{lubotzkymann}, by Theorem \ref{thm main} and Lemma \ref{lem abelian 2-groups and powerful p-groups}, then $\exp\M(G)$ divides $\exp\mho(G)\cdot\exp\M(G/\mho(G))$.
Moreover, every element of $\mho(G)$ is a $p$-power \cite[Th.~7.2]{berkovich}, so that $\exp\mho(G)=\exp G/p$.
We remind that $\exp G/\mho(G)=p$, and groups of exponent $p$ are regular.
Assuming that $\exp\M(G/\mho(G))$ divides $p$ the proof is completed.
This is the case for $p=3$, as well for absolutely regular $p$-groups as $\cl(G/\mho(G))<p$.
\end{proof}
\begin{proof}[\bf Proof of Proposition \ref{pps 2 generator subgroups cover}]
Since there exists a $R\in\Sp(G)$ such that $\exp R=\exp G$, by Lemma \ref{lem unitary covers} it is enough to prove that there exists $S\in\Sp(G)$ such that $\exp\ZU(G)$ divides $\exp\ZU(S)$.
Choose any element $\mu\in\M(G)$ satisfying $o(\mu)=\exp\M(G)$. By Lemma \ref{lem unitary cocycles}, there exists $\alpha\in\ZU(G)$ such that $\mu=[\alpha]$.
Let $x,y\in G$ such that $o(\alpha)=o(\alpha(x,y))$, and set $S=\gen{x,y}$.
Since $\alpha_S\in\ZU(S)$ and $o(\alpha_S)=o(\alpha)$, then $o(\mu)$ divides $o(\alpha)$ and the proof is complete.
\end{proof}
\begin{proof}[\bf Proof of Lemma \ref{pps exponent of zelmanov}]
For any $p$-group $G$ and any integer $m$, the $m$-agemo subgroup is defined as $$\mho^m(G)=\gen{g^{p^m}\ \ |\ \ g\in G}\ .$$
Let $\Gamma$ be any central extension of $\Zel{k}$, by Proposition \ref{pps central extension and generators} there exists a 2-generated subgroup $X$ of $\Gamma$ which is a central extension of $\Zel{k}$ such that $\Gamma'=X'$.
Since $\Zel{k}$ is the maximal 2-generated group of exponent $p^k$, it follows that $X/\mho^k(X)\simeq\Zel{k}$. Therefore, $A\cap\Gamma'=A\cap X'\leq\mho^k(X)$.
The assertion $$\exp\GU(\Zel{k})=p^k\cdot\exp\M(\Zel{k})$$ follows, since $\GU(\Zel{k})$ has the projective lifting property.

We now prove that $p^k$ divides $\exp\M(\Zel{k})$.
For any integer $l$, the sequence $$1\to\mho^k(\Zel{k+l})\to\Zel{k+l}\to\Zel{k}\to 1$$ give rise to the central extension $$1\to\mho^k(\Zel{k+l})/[\mho^k(\Zel{k+l}),\Zel{k+l}]\to\Zel{k+l}/[\mho^k(\Zel{k+l}),\Zel{k+l}]\to\Zel{k}\to 1\ .$$
There is an embedding (\S\ref{section central extensions}) $$\mho^k(\Zel{k+l})\cap\Zel{k+l}'/[\mho^k(\Zel{k+l}),\Zel{k+l}]\hookrightarrow\M(\Zel{k})\ ,$$ which for large enough $l$ becomes an isomorphism, and the exponent of this group is $p^{m_0}$ for the minimal $m_0$ such that $$\mho^{m_0}(\mho^k(\Zel{k+l})\cap\Zel{k+l}')\leq[\mho^k(\Zel{k+l}),\Zel{k+l}]\ .$$
Any 2-generator group $G$ of exponent $p^{k+l}$ is a homomorphic image of $\Zel{k+l}$, then $$\mho^{m_0}(\mho^k(G)\cap G')\leq[\mho^k(G),G]\ ,$$ therefore, lower bounds for $\exp\M(\Zel{k})$ derives from two generators groups.
Consider the covering group of $C_n\times C_n$ defined by $$G=\gen{x,y\ |\ x^{p^{k}}=y^{p^{2k}}=1\ ,\ y^x=y^{p^k+1}}\ ,$$ then $G'=\mho^k(G)=Z(G)=\gen{y^k}$, so that $$\mho^{m_0}(\mho^k(G)\cap G')=\mho^{m_0}(\gen{y^k})\ \ ,\ \ [\mho^k(G),G]=1\ ,$$ so that $m_0\geq k$ completing the proof.
\end{proof}

\section*{Acknowledgement}
The author is conducing his PhD studies, he is indebted to his menthors Prof.~Eli Aljadeff (Technion) and Dr.~Yuval Ginosar (University of Haifa). 
A preliminary version was presented at "Groups St Andrews 2013", University of St Andrews, Scotland.

\bibliographystyle{alpha}

\end{document}